\DeclareMathOperator{\Id}{\; Id}
\newtheorem{theo}{Theorem}
\newtheorem{prop}[theo]{Proposition}
\newtheorem{cor}[theo]{Corollary}
\newtheorem{defi}[theo]{Definition}
\newtheorem{rem}[theo]{Remark}
\newtheorem{exa}[theo]{Example}
\numberwithin{theo}{section}
\numberwithin{equation}{section}
\DeclareMathOperator{\Real}{Re}
\def\NN{{\mathbb N}}
\def\RR{{\mathbb R}}
\def\CC{{\mathbb C}}
\def\cd{{\hbox{\rm card}}}
\def\DD{{{\mathcal{D}}}}
\def\MM{{{\mathcal{M}}}}
\def\G{{{\Gamma}}}
\def\a{{\alpha}}
\def\b{{\beta}}
\def\g{{\gamma}}
\def\l{{\lambda}}
\def\p{{\pi}}
\def\f{{\varphi}}
\def\CCC{{\mathcal C}}
\def\setprop{{\, \vrule width 0.6pt height 8pt depth 2pt \;}}
\def\AD{{\mathcal A}} 
\def\ZD{{\mathcal Z}}
\def\uu{{\bf U}}
\def\be{{\,\bf e}}
\def\Ker{{\rm Ker\;}}
\title[Spectral analysis of the second derivative]{The spectrum of the Hilbert space valued second derivative\\ with general self-adjoint boundary conditions}
\keywords{Vector-valued function spaces; Self-adjoint boundary conditions; Weyl asymptotics; Differential operators on graphs}
\subjclass[2010]{34B45, 34L20, 05C50, 35J25, 34L10}
\author{Joachim von Below}
\address{Joachim von Below, LMPA Joseph Liouville ULCO, FR CNRS Math. 2956,
Universit\'e Lille Nord de France,
50 rue F. Buisson, B.P. 699, F--62228 Calais Cedex, France}
\email{joachim.von.below@lmpa.univ-littoral.fr}
\author{Delio Mugnolo}
\address{Delio Mugnolo, Institut f\"ur Analysis, Universit\"at Ulm, Helmholtzstra{\ss}e 18, 89081 Ulm, Germany}
\email{delio.mugnolo@uni-ulm.de}
\thanks{This article was partially completed during several visits by the second author at the ULCO in Calais. He is grateful to the ULCO for its financial support, and to the first author for his warm hospitality. The first author is grateful to the University of Ulm
for several invitations during the last years. This research has also been partially supported by the Land Baden--W\"urttemberg in the framework of the \emph{Juniorprofessorenprogramm} -- research project on ``Symmetry methods in quantum graphs''. \\ {\indent\textbf{Corresponding author}: Joachim von Below,
LMPA Joseph Liouville ULCO, FR CNRS Math. 2956, Universit\'e Lille Nord de France, 50 rue F. Buisson, B.P. 699, F--62228 Calais Cedex, France; email: joachim.von.below@lmpa.univ-littoral.fr;  Tel.: +33.3.21.46.36.27; Fax: +33.3.21.46.36.69}}
\begin{document}

\begin{abstract}

We consider a large class of self-adjoint elliptic problem associated with the second derivative acting on a space of vector-valued functions. We present two different approaches to the study of the associated eigenvalues problems. The first, more general one allows to replace a secular equation (which is well-known in some special cases) by an abstract rank condition. The latter seems to apply particularly well to a specific boundary condition, sometimes dubbed ``anti-Kirchhoff'' in the literature, that arise in the theory of differential operators on graphs; it also permits to discuss interesting and more direct connections between the spectrum of the differential operator and some graph theoretical quantities. In either case our results yield, among other, some results on the symmetry of the spectrum.
\end{abstract}
\maketitle

\section{Introduction}\label{intro}

Differential equations on networks have a long history, starting probably in 1847 with Kirchhoff's electrical circuit equations using
the potential mesh rule and an incident currency law \cite{kirch}.  For specific differential edge operators both latter conditions were naturally replaced 
 were by the continuity potential condition and an incident flow condition. Nowadays, dealing with rather general vertex transition  condition, we are led to abstract interacting problems, as e.g.\ the Bochner space setting below.

The problem of determining all possible self-adjoint realizations of a differential operator is quite common in mathematical physics, as it is always possible conceivably to prepare a corresponding quantum system in such a way that the associated Hamiltonian is one such realization.

Relying upon known results for differential operators on domains, in~\cite{KosSch99} V.\ Kostrykin and R.\ Schrader have proposed a natural representation of self-adjoint boundary conditions for 1-dimensional systems, i.e., for Laplace operators on a Bochner space $L^2(0,1;\ell^2(E))$. If $E$ is the edge set of a graph, then such systems are usually called ``networks'' or ``quantum graphs'' in the literature. That approach has the drawbacks that it is poorly fitted for a variational setting, and that the boundary conditions do not determine their representations uniquely. Both issues can be avoided making use of an alternative parametrization proposed by P.\ Kuchment in~\cite{Kuc04}. Both parametrization are equivalent, in the sense that each Kuchment's boundary condition can be represented using Kostrykin--Schrader's formalism, and vice versa. Moreover, if $E$ is finite, i.e., if one considers a differential operator on finitely many edges, then it has been shown in~\cite{Kuc04} that self-adjoint boundary conditions are actually exhausted by either parametrization. 
Kuchment's parametrization can be reduced to the choice of a closed subspace $Y$ of $\ell^2(E)$ and of a bounded linear self-adjoint $R$ on $\ell^2(E)$; see Section~\ref{eigen1} for details. To the best of our knowledge, not much is known about spectral properties of general self-adjoint realizations, beside the resolvent formula of Krein type obtained in~\cite{Pan06,Pos07}. The goal of our note is to determine the (real) spectrum of this class of Hamiltonians in dependence of $Y$ and $R$.

It turns out that, on a spectral level, a different choice of $R$ has the sole effect of shifting (in a complicated way) the eigenvalues, but the essential choice is, in a certain sense, that of $Y$. This becomes particularly clear at the end of Section~\ref{eigen1}, where, after obtaining a general description of the spectrum in the general case, we present a particularly appealing spectral symmetry if $R=0$. Moreover, we show how our results can be further simplified in the case of small sets $E$ -- and in particular in the case of $\ell^2(E)=\mathbb C$,i.e., of a single loop.

There are at least two special cases where our abstract secular equation has a much more intuitive meaning: the boundary conditions may reflect so-called \emph{continuity} and \emph{Kirchhoff conditions} -- this is in fact by far the most common condition considered in the literature, cf.\ Section~\ref{adjcal}. In Section~\ref{sec:antikir} we consider a further -- less standard but interesting -- boundary condition, which is in some sense dual to the previous one: the function satisfies Kirchhoff's law where its associated flow is continuous in the network's vertices.

As a motivation for the study of this special and seemingly counter-intuitive boundary condition, it should be remarked that this so-called ``anti-Kirchhoff'' condition is already known to be distinguished among all possible self-adjoint conditions for the second derivative on a graph. E.g., it has been observed in~\cite{ExnTur06,Mug10} that, apart from the decoupled conditions of Dirichlet/Neumann type, Kirchhoff and anti-Kirchhoff conditions are the sole ones invariant under edge permutations (up to lower order perturbations); and on a loop, also the sole ones that induce $L^\infty$-contractive heat semigroups.

In Sections~\ref{adjcal} and~\ref{sec:antikir} we study these two boundary conditions thoroughly. In this special case, the general secular equation turns out to be tightly related to the characteristic polynomial of the transition matrix of the underlying abstract graph. We can then strongly improve our general results and are in fact able to describe the spectrum of the Laplacian under said conditions in terms of invariants of the underlying graph: 
several features are proved to reflect directly known graph theoretical properties and quantities, like bipartiteness.  Our results can be seen as partial solutions to certain inverse problems. In Section \ref{sec:inverse} we compare information made available by our results with the outcome of previous linear algebraic investigations.

\section{Eigenvalue problems for the second derivative on general Bochner spaces}\label{eigen1}

Let $H$ be a separable Hilbert space and consider the Bochner space $L^2(0,1;H)$. Let $Y$ be a closed subspace of $H\times H$ and consider the Helmholtz equation with boundary conditions
\begin{equation}\label{bc1}
\begin{pmatrix}u(0)\\u(1)\end{pmatrix}\in Y\hbox{ and }\begin{pmatrix}-u'(0)\\u'(1)\end{pmatrix}\in Y^\perp,
\end{equation}
i.e., the eigenvalue problem
\begin{equation}\tag{$\rm{EP_Y}$}
\left\{
\begin{array}{rcll}
-u''(x)&=&\lambda u(x),\qquad &x\in(0,1)\\
\begin{pmatrix}u(0)\\u(1)\end{pmatrix}&\in& Y,\\
\begin{pmatrix}-u'(0)\\u'(1)\end{pmatrix}&\in& Y^\perp,\\
\end{array}
\right.
\end{equation}
in $L^2(0,1;H)$. This kind of boundary conditions might be generalized considering
\begin{equation}\tag{$\rm{EP_{Y,R}}$}
\left\{
\begin{array}{rcll}
u''(x)&=&\lambda u(x),\qquad &x\in(0,1)\\
\begin{pmatrix}u(0)\\u(1)\end{pmatrix}&\in& Y,\\
\begin{pmatrix}-u'(0)\\u'(1)\end{pmatrix}+R\begin{pmatrix}u(0)\\u(1)\end{pmatrix}&\in& Y^\perp,\\
\end{array}
\right.
\end{equation}
where $R\in{\mathcal L}(Y)$. This class of boundary conditions has been made popular by~\cite{Kuc04}, but we point out that they, and even some generalization to eigenvalue-dependent boundary conditions, had already been suggested in~\cite[\S3]{be2}.

\begin{rem}
In the special setting of diffusion on $1$-dimensional ramified structures (so-called \emph{networks} or \emph{quantum graphs}) this class of boundary conditions has been proposed in~\cite{Kuc04}. Observe that the boundary conditions in $\rm(EP_{Y,R})$ are less general than those of Agmon--Douglis--Nirenberg type
\begin{equation}\tag{ADN}
A\begin{pmatrix}u(0)\\u(1)\end{pmatrix}+B\begin{pmatrix}-u'(0)\\u'(1)\end{pmatrix}=0
\end{equation}
for $A,B\in{\mathcal L}(H\times H)$: this is due to the fact that the first boundary condition in $\rm(EP_{Y,R})$ determines the second completely. E.g., if we take $H={\mathbb C}$, then taking 
$$Y=\left\{ \begin{pmatrix}c\\ c\end{pmatrix}\setprop c\in\mathbb C\right\},$$
i.e., letting 
$$f(0)=f(1)$$ 
forces the second boundary condition to read
$$f'(0)=f'(1)+Rf(0)$$
for some $R\in \mathbb C$. Thus, if we consider
$$A:=\begin{pmatrix}
1 & -1\\ 0 & 0
\end{pmatrix}\qquad \hbox{and}\qquad B:=\begin{pmatrix}
0 & 0 \\ 2 & -1
\end{pmatrix}
,$$
thus obtaining continuity
$$f(0)=f(1)$$ 
and a second boundary condition
$$2f'(0)=f'(1)$$
 other than Kirchhoff, these both boundary conditions cannot be expressed in the form of $(\rm{EP_{Y,R}})$ for any choice of $R$.
\end{rem}

In spite of their less general character, the boundary conditions in~$\rm(EP_{Y,R})$ give the problem an interesting variational structure that will prove extremely useful in spectral investigations.
The sesquilinear form associated with $(\rm{EP_{Y,R}})$ is given by
$$a_R(f,g):=\int_0^1 (f'(x)| g'(x))_{H} dx+\left( R\begin{pmatrix}
f(0)\\ f(1)
\end{pmatrix}\Big| \begin{pmatrix}
g(0)\\ g(1)
\end{pmatrix}\right)_{H\times H}$$
with dense domain 
$$V_Y:=\left\{f\in H^1(0,1;H) \setprop \begin{pmatrix}f(0)\\f(1)\end{pmatrix}\in Y\right\}.$$ 

We denote by $-\Delta_{Y,R}$ the operator associated with $(a_R,V_Y)$, i.e.,
\begin{eqnarray*}
D(\Delta_{Y,R})&:=&\{f\in V_Y\setprop\exists g\in L^2(0,1;H) \hbox{ s.t. }a_R(f,h)=(g|h)_H\hbox{ for all }h\in V_Y\}\\
&=&\left\{f\in H^2(0,1;H)\setprop\begin{pmatrix}
-f'(0)\\ f'(1)
\end{pmatrix}+R\begin{pmatrix}
f(0)\\ f(1)
\end{pmatrix}\in Y^\perp \hbox{ and }\begin{pmatrix}
f(0)\\ f(1)
\end{pmatrix}\in Y\right\},\\
\Delta_{Y,R}f&:=&-g.
\end{eqnarray*}

The model is still sufficiently easy to allow us to explicitly compute the eigenvalues of the corresponding problem $\rm(EP_{Y,R})$. We mention that, with a different representation of the boundary conditions, the number of negative eigenvalues has been computed in~\cite{BehLug10}.

\begin{prop}\label{multipl0}
Let $Y$ be a closed subspace of $H\times H$ and $R\in {\mathcal L}(Y)$ be self-adjoint. The spectrum of the operator $-\Delta_{Y,R}$ is real; it is contained in $[0,\infty)$ if $R$ is positive semidefinite. If $H$ is finite-dimensional, then $-\Delta_{Y,R}$ has pure point spectrum.
Then the following assertions hold.

1)  $0$ is an eigenvalue of $\rm(EP_{Y,R})$ if and only if 
\[
H_Y:=\{(A,B)\in H\times H:A=B\}\cap Y\not=\{0\}\quad \hbox{and}\quad R=0,
\]
and in this case the multiplicity of $0$ agrees with $
\dim H_Y\le \dim H.$

2) Let $\lambda>0$. If $R$ is positive semidefinite, then  $\lambda >0$ is an eigenvalue of $\rm(EP_{Y,R})$ if and only if the space $H_{Y,R}$ of all solutions $(A,B)\in H\times H$ of the system 
\begin{equation}
\label{PYrankgen}
\left\{
\begin{array}{l}
P_{Y^\perp}\begin{pmatrix}
A\\ A\cos\sqrt{\lambda}+B\sin\sqrt{\lambda}
\end{pmatrix}=0\\
P_{Y}\left(\sqrt{\lambda }\begin{pmatrix}
B\\ A\sin\sqrt{\lambda}-B\cos\sqrt{\lambda}
\end{pmatrix}-R\begin{pmatrix}
A\\ A\cos\sqrt{\lambda}+B\sin\sqrt{\lambda}
\end{pmatrix}\right)=0
\end{array}
\right.
\end{equation}
has nonzero dimension; and in this case the multiplicity of $\lambda$ agrees with $\dim H_{Y,R}\le 2\dim H$. 
\end{prop}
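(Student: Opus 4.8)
The plan is to separate the statement into a ``soft'' operator-theoretic part and an explicit computation of the eigenfunctions. For the soft part I would first check that $a_R$ is densely defined (since $C_c^\infty(0,1;H)\subset V_Y$), symmetric (because $R$ is self-adjoint), and closed and bounded below. The latter is where care is needed: the boundary term $f\mapsto (R(f(0),f(1))\,|\,(f(0),f(1)))_{H\times H}$ is, by continuity of the trace $H^1(0,1;H)\to H\times H$ together with an Ehrling-type inequality $\|f(0)\|^2+\|f(1)\|^2\le \e\|f'\|_{L^2}^2+C_\e\|f\|_{L^2}^2$, form-bounded with respect to the Dirichlet form with relative bound strictly less than $1$. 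The first representation theorem then yields that $-\Delta_{Y,R}$ is self-adjoint and bounded below, so its spectrum is real; and if $R$ is positive semidefinite then $a_R(f,f)\ge 0$, giving $\sigma(-\Delta_{Y,R})\subseteq[0,\infty)$. When $\dim H<\infty$ the embedding $V_Y\hookrightarrow L^2(0,1;H)$ is compact (Rellich for $H^1(0,1)\hookrightarrow L^2(0,1)$ tensored with the finite-dimensional $H$), so the resolvent is compact and the spectrum is pure point.

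For the eigenfunctions I would invoke the second, explicit description of $D(\Delta_{Y,R})$ given above: $-\Delta_{Y,R}f=\lambda f$ holds exactly when $f\in H^2(0,1;H)$ solves $-f''=\lambda f$ and satisfies $(f(0),f(1))\in Y$ together with $(-f'(0),f'(1))+R(f(0),f(1))\in Y^\perp$. For $\lambda>0$ the general $H$-valued solution is $u(x)=A\cos(\sqrt\lambda x)+B\sin(\sqrt\lambda x)$ with $A,B\in H$; computing $u(0),u(1),-u'(0),u'(1)$ gives $(u(0),u(1))=(A,\,A\cos\sqrt\lambda+B\sin\sqrt\lambda)$ and $(-u'(0),u'(1))=\sqrt\lambda(-B,\,-A\sin\sqrt\lambda+B\cos\sqrt\lambda)$. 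Rewriting the two membership conditions as $P_{Y^\perp}(\cdot)=0$ and $P_Y(\cdot)=0$, and using that $R\in\mathcal L(Y)$ leaves $Y$ invariant, reproduces precisely the system \eqref{PYrankgen} (after multiplying the second relation by $-1$). Since $(A,B)\mapsto u$ is a linear bijection from $H\times H$ onto the solution space of the scalar-coefficient ODE $-u''=\lambda u$, it restricts to an isomorphism of $H_{Y,R}$ onto $\ker(\lambda+\Delta_{Y,R})$; hence $\lambda$ is an eigenvalue iff $\dim H_{Y,R}>0$, with multiplicity $\dim H_{Y,R}\le \dim(H\times H)=2\dim H$.

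For $\lambda=0$ the solutions of $-f''=0$ are the affine maps $f(x)=A+Cx$, and here I would argue through the form rather than the ODE. If $R$ is positive semidefinite and $0$ is an eigenvalue with eigenfunction $f$, then $a_R(f,f)=0$ forces $\int_0^1\|f'\|^2=0$, so $f\equiv A$ is constant, and simultaneously $(R(A,A)\,|\,(A,A))=0$, whence $R(A,A)=0$. Together with $(A,A)=(f(0),f(1))\in Y$ this exhibits a nonzero element of $H_Y$; conversely, any $(A,A)\in H_Y$ with $R(A,A)=0$ yields the eigenfunction $f\equiv A$. In particular, when $R=0$ the kernel is exactly the image of $H_Y$ under $A\mapsto(f\equiv A)$, so $0$ is an eigenvalue iff $H_Y\ne\{0\}$ and the multiplicity equals $\dim H_Y$, the bound $\dim H_Y\le\dim H$ coming from the injection $(A,A)\mapsto A$.

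The main obstacle is not the computation but two points of rigor. First, one must verify that the trace term is relatively form-bounded with bound $<1$, so that $a_R$ is genuinely closed and the representation theorem applies cleanly. Second, and more delicate, the $\lambda=0$ analysis relies essentially on the positivity of $R$: it is positivity that forces an eigenfunction at $0$ to be constant and thus reduces the criterion to $H_Y$. Without it, non-constant affine solutions of the boundary system can persist, and $0$ may lie in the spectrum even when no admissible constant exists, so I would take care to state and use the semidefiniteness hypothesis exactly where it enters.
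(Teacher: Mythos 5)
Your proposal is correct and follows essentially the same route as the paper: form methods give self-adjointness, nonnegativity and (for $\dim H<\infty$) discreteness of the spectrum; the explicit trigonometric solution and the two projection conditions yield \eqref{PYrankgen} for $\lambda>0$; and the vanishing of $a_R(f,f)$ forces constancy of a $0$-eigenfunction. Your sharper criterion at $\lambda=0$ --- that $R$ need only annihilate some nonzero $(A,A)\in H_Y$ rather than vanish identically --- is in fact more accurate than the proposition's ``$R=0$'' (the paper only asserts this equivalence parenthetically), and you are also more explicit than the paper about why the boundary term is relatively form-bounded, which the paper leaves implicit.
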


\begin{proof}
The fact that the form $(a_R,V_Y)$ is real and, for positive semidefinite $R$, accretive shows that the spectrum of $-\Delta_{Y,R}$ is contained in the positive half line. If $H$ is finite-dimensional, then the compact embedding of $D(\Delta_{Y,R})$ into $L^2(0,1;H)$ follows from the Aubin--Lions Lemma and yields that $-\Delta_{Y,R}$ has pure point spectrum.

(1) If
$$u''(x)=0,\qquad x\in [0,1],$$
then 
$$0=a(u,u)=\int_0^1 \|u'(x)\|^2_H dx+\left( R\begin{pmatrix}
u(0)\\ u(1)
\end{pmatrix}\Big| \begin{pmatrix}
u(0)\\ u(1)
\end{pmatrix}\right)_{H\times H}\ge \int_0^1 \|u'(x)\|^2_H dx,$$
hence $u$ can only be an eigenvector for the eigenvalue $0$ if it is entry-wise constant, i.e.,
$$u(x)\equiv A\in H\qquad \hbox{for all }x\in [0,1].$$
Hence, in order for $u$ to satisfy the first boundary condition in $\rm(EP_{Y,R})$ (the second one being satisfied if and only if $R=0$ or $Y=\{0\}$, of course) one needs that 
$$\begin{pmatrix}
u(0)\\ u(1)
\end{pmatrix}\equiv 
\begin{pmatrix}
A \\ A
\end{pmatrix}\in Y.$$

(2) For $\lambda>0$, we see that the characteristic equation 
$$-u''(x)=\lambda u(x),\qquad x\in [0,1],$$
is solved for all $\lambda>0$ by 
$$u(x):=A\cos \sqrt{\lambda }x +B\sin \sqrt{\lambda }x,\qquad x\in [0,1],$$
for some $A,B\in H$, whence
$$u'(x)=\sqrt{\lambda }(-A\sin \sqrt{\lambda }x +B\cos \sqrt{\lambda }x),\qquad x\in [0,1],$$
and hence
$$\begin{pmatrix}
u(0)\\ u(1)
\end{pmatrix}=
\begin{pmatrix}
A\\
A\cos \sqrt{\lambda } +B\sin \sqrt{\lambda }
\end{pmatrix}$$
along with
$$\begin{pmatrix}
-u'(0)\\ u'(1)
\end{pmatrix}=\sqrt{\lambda}
\begin{pmatrix}
-B\\
-A\sin \sqrt{\lambda } +B\cos \sqrt{\lambda }
\end{pmatrix}.$$
Imposing the boundary conditions~\eqref{bc1} we obtain
$$P_{Y^\perp}\begin{pmatrix}
A\\ A\cos\sqrt{\lambda}+B\sin\sqrt{\lambda}
\end{pmatrix}=0
$$
and
$$
P_{Y}\left(\sqrt{\lambda }\begin{pmatrix}
B\\ A\sin\sqrt{\lambda}-B\cos\sqrt{\lambda}
\end{pmatrix}-R\begin{pmatrix}
A\\ A\cos\sqrt{\lambda}+B\sin\sqrt{\lambda}
\end{pmatrix}\right)=0.$$
Hence, $\lambda>0$ is an eigenvalue if and only if the above equations have a nontrivial solution $(A,B)\in H\times H$.

Combining the cases in (1) and (2) yields the claimed rank condition. Finally, the bound on the dimension of the eigenspace associated with $0$ follows from Grassmann's formula, as
\[
\dim H_Y=\dim H+\dim Y-\dim(\{(A,B)\in H\times H\setprop A=B \}+ Y)\le \dim H.
\]
This concludes the proof.  
\end{proof}

If $R=0$, an easy but interesting consequence of this general result is the following.
\begin{cor}\label{multipl0cor}
Let $Y$ be a closed subspace of $H\times H$ and $R=0$. Let both $\lambda>0$ and $(\pi-\sqrt{\lambda})^2>0$. Then $\lambda$ is an eigenvalue of $\rm(EP_{Y})$ if and only if $(\pi-\sqrt{\lambda})^2$ is an eigenvalue of $\rm(EP_{Y^\perp})$. In this case, their algebraic multiplicities coincide.
\end{cor}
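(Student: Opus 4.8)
The plan is to derive everything from the rank description in Proposition~\ref{multipl0}(2), specialized to $R=0$. For $R=0$ the two conditions in~\eqref{PYrankgen} simplify, after dividing the second one by $\sqrt\lambda>0$, to
\begin{equation*}
P_{Y^\perp}\begin{pmatrix} A\\ A\cos\sqrt\lambda+B\sin\sqrt\lambda\end{pmatrix}=0,\qquad P_{Y}\begin{pmatrix} B\\ A\sin\sqrt\lambda-B\cos\sqrt\lambda\end{pmatrix}=0,
\end{equation*}
so that $\lambda>0$ is an eigenvalue of $\rm(EP_Y)$ precisely when the space $H_{Y,0}$ of solutions $(A,B)\in H\times H$ of this system is nontrivial, with multiplicity $\dim H_{Y,0}$. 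The first observation is that replacing $Y$ by $Y^\perp$ merely interchanges the two orthogonal projections $P_Y$ and $P_{Y^\perp}$, since $(Y^\perp)^\perp=Y$. Thus I must compare the system above at $\lambda$ with the same system, projections swapped, evaluated at $\mu:=(\pi-\sqrt\lambda)^2$.

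The bridge between the two is purely trigonometric. Writing $t:=\sqrt\lambda$, one has $\sqrt\mu=|\pi-t|$, whence $\cos\sqrt\mu=-\cos t$ and $\sin\sqrt\mu=\sigma\sin t$, where $\sigma:=\mathrm{sgn}(\pi-t)\in\{\pm1\}$; both identities follow at once from the addition formulae and the evenness of the cosine. I would then introduce the linear isomorphism $T$ of $H\times H$ given by $T(A,B):=(\sigma B,A)$ and check that it carries $H_{Y,0}$ (at $\lambda$) into $H_{Y^\perp,0}$ (at $\mu$). Concretely, setting $(A',B')=(\sigma B,A)$ and inserting the identities, the vector $\big(A',\,A'\cos\sqrt\mu+B'\sin\sqrt\mu\big)$ equals $\sigma\big(B,\,A\sin t-B\cos t\big)$, which lies in $Y^\perp$ by the second equation at $\lambda$ and hence is annihilated by $P_Y$; and $\big(B',\,A'\sin\sqrt\mu-B'\cos\sqrt\mu\big)$ equals $\big(A,\,A\cos t+B\sin t\big)$, which lies in $Y$ by the first equation at $\lambda$ and hence is annihilated by $P_{Y^\perp}$. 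These are exactly the two conditions of the swapped ($\rm{EP_{Y^\perp}}$) system at $\mu$, so $T$ maps $H_{Y,0}$ into $H_{Y^\perp,0}$. The inverse $T^{-1}(A',B')=(B',\sigma A')$ is handled by reading the same computation backwards, so $T$ restricts to a linear isomorphism of $H_{Y,0}$ onto $H_{Y^\perp,0}$.

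From this the corollary is immediate. The two solution spaces are isomorphic, hence of equal dimension, so by Proposition~\ref{multipl0}(2) (applicable to $Y$ at $\lambda$ and to $Y^\perp$ at $\mu$, both legitimate since $\lambda>0$, $\mu>0$, and $R=0$ is positive semidefinite) $\lambda$ is an eigenvalue of $\rm(EP_Y)$ exactly when $\mu$ is an eigenvalue of $\rm(EP_{Y^\perp})$; and the multiplicities, which equal these dimensions and are simple (algebraic equalling geometric by self-adjointness), coincide.

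The only point requiring genuine care is the bookkeeping of the sign $\sigma$: it is essential that $\sigma^2=1$, so that the cross terms recombine to remove $\sigma$ in one equation and factor it out in the other. I would also emphasize that the construction of $T$ never divides by $\sin t$, so the degenerate case $\sin\sqrt\lambda=0$ (i.e.\ $\sqrt\lambda\in\pi\NN$, with $\sqrt\lambda\neq\pi$ guaranteed by $\mu>0$) needs no separate treatment.
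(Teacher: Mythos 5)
Your proof is correct and follows essentially the same route as the paper's: both reduce to the system~\eqref{PYrankgennoB} from Proposition~\ref{multipl0} and observe that swapping the two components of $(A,B)$ intertwines the system for $Y$ at $\lambda$ with the one for $Y^\perp$ at $(\pi-\sqrt{\lambda})^2$. Your bookkeeping of the sign $\sigma=\mathrm{sgn}(\pi-\sqrt{\lambda})$ is in fact more careful than the paper's, which uses the identity $\sin\sqrt{(\pi-\sqrt{\lambda})^2}=\sin\sqrt{\lambda}$ and the unsigned swap $(\tilde A,\tilde B)=(B,A)$ — both valid as written only when $\sqrt{\lambda}<\pi$.
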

\begin{proof}
By Proposition~\ref{multipl0}, $\lambda$ is an eigenvalue of $\rm(EP_{Y})$ if and only if the space $H_{Y}$ of all solutions $(A,B)\in H\times H$ of the system 
\begin{equation}
\label{PYrankgennoB}
\left\{
\begin{array}{l}
P_{Y^\perp}\begin{pmatrix}
A\\ A\cos\sqrt{\lambda}+B\sin\sqrt{\lambda}
\end{pmatrix}=0\\
P_{Y}\begin{pmatrix}
B\\ A\sin\sqrt{\lambda}-B\cos\sqrt{\lambda}
\end{pmatrix}=0
\end{array}
\right.
\end{equation}
has dimension larger than $0$. Now, it suffices to observe that because
\[
\sin\sqrt{(\pi-\sqrt{\lambda})^2}=\sin\sqrt{\lambda}\qquad \hbox{and}\qquad	\cos\sqrt{(\pi-\sqrt{\lambda})^2}=-\cos\sqrt{\lambda},
\]
$(A,B)$ is a solution of the system in~\eqref{PYrankgennoB} if and only if $(\tilde{A},\tilde{B}):=(B,A)$ is a solution of the system in~\eqref{PYrankgennoB} with the roles of $Y,Y^\perp$  interchanged -- i.e., of 
\begin{equation}
\label{PYrankgennoB2}
\left\{
\begin{array}{l}
P_{Y}\begin{pmatrix}
\tilde{A}\\ \tilde{A}\cos\sqrt{\lambda}+\tilde{B}\sin\sqrt{\lambda}
\end{pmatrix}=0\\
P_{Y^\perp}\begin{pmatrix}
\tilde{B}\\ \tilde{A}\sin\sqrt{\lambda}-\tilde{B}\cos\sqrt{\lambda}
\end{pmatrix}=0
\end{array}
\right.
\end{equation}
This concludes the proof.
\end{proof}

\begin{exa}\label{Ykirch-comp}
In the special case where $\rm(EP_{Y,R})$ can be represented as an eigenvalue problem on a network associated with a directed graph $\Gamma$, the most natural boundary conditions are those requiring continuity of the function values as well as a Kirchhoff-like rule for the normal derivatives at each vertex of $\Gamma$, see Section~\ref{adjcal} for details. Such conditions have been discussed in the literature since~\cite{Lum80}, and in fact implicitly even since~\cite{RueSch53}. It is well-known that the associated elliptic problem is self-adjoint, and in fact we can represent them in our formalism letting 
$$Y:={\rm Range}\,\tilde{ \DD},$$ 
where 
\begin{equation}\label{deftilde}
\tilde{ \DD}:=\begin{pmatrix} ({ \DD}^+)^T \\({ \DD}^-)^T\end{pmatrix},
\end{equation}
and ${ \DD}^+,{ \DD}^-$ are the matrices whose entries are the positive and negative parts of the entries of the  (signed) incidence matrix $\DD$ of $\Gamma$. In this case one can check that the dimension of $Y$ agrees with the number of vertices, i.e., with the cardinality of $V$, but for general $Y$ there is no such an intuitive interpretation. An exception is constituted by the ``dual'' conditions to the above one, i.e., by the conditions arising from the choice 
$$Y:=\left({\rm Range}\,\tilde{ \DD}\right)^\perp,$$
which in the literature are usually referred to as \emph{$\delta'$-couplings} or \emph{anti-Kirchhoff} vertex conditions. 
\end{exa}

\begin{defi}\label{CKKC}
For $\tilde{ \DD}$ defined as in~\eqref{deftilde} and with
\[
Y:={\rm Range}\,\tilde{ \DD}\qquad \hbox{or}\qquad Y:=\left({\rm Range}\,\tilde{ \DD}\right)^\perp,
\]
the boundary conditions 
\begin{equation*}
\left\{
\begin{array}{rcll}
\begin{pmatrix}u(0)\\u(1)\end{pmatrix}&\in& Y,\\
\begin{pmatrix}-u'(0)\\u'(1)\end{pmatrix}&\in& Y^\perp,\\
\end{array}
\right.
\end{equation*}
will be referred to as \textit{$CK$--condition} and \textit{$KC$--condition}, respectively. The second derivative with these conditions will be denoted by $\Delta^{CK}$ and $\Delta^{KC}$, respectively.
\end{defi}

Several features of the $KC$-condition -- including index theorems, trace formulae, convergence results, parabolic properties -- have been considered among others in~\cite{CheExn04,AlbCacFin07,FulKucWil07,Pos09,BolEnd09,CarMug09}. 
We will thoroughly discuss the eigenvalue multiplicities for these boundary conditions in Sections~\ref{adjcal} and~\ref{sec:antikir}.
 
\begin{rem}
If $H$ is finite-dimensional, say ${\rm dim}\; H=m$, and $R$ is positive semidefinite, then the $4m\times 2m$ algebraic system~\eqref{PYrankgen} has a nontrivial solution $(A,B)$ if and only if 
\begin{equation}\label{rankcond}
2m-{\rm rank}\;
\begin{pmatrix}
P_{Y^\perp}\begin{pmatrix}
{\Id}_{\mathbb C^m} & 0\\ \cos\sqrt{\lambda} {\Id}_{\mathbb C^m}& \sin\sqrt{\lambda}{\Id}_{\mathbb C^m}
\end{pmatrix}\\ 
P_{Y}\left(\sqrt{\lambda }\begin{pmatrix}
0 & {\Id}_{\mathbb C^m}\\ \sin\sqrt{\lambda}{\Id}_{\mathbb C^m} & -\cos\sqrt{\lambda}{\Id}_{\mathbb C^m}
\end{pmatrix}-R \begin{pmatrix}
{\Id}_{\mathbb C^m} & 0\\ \cos\sqrt{\lambda}{\Id}_{\mathbb C^m}& \sin\sqrt{\lambda}{\Id}_{\mathbb C^m}
\end{pmatrix}\right)
\end{pmatrix}
\ge 1.
\end{equation}
Hence, by Proposition~\ref{multipl0} each eigenvalue $\lambda$ can have multiplicity $m(\lambda)$ at most $2m$. More precise bounds on $m(\lambda)$ can be obtained in the special cases considered in Sections~\ref{adjcal} and~\ref{sec:antikir}.
\end{rem}
As the eigenvalues $\lambda_k$  interlace
with the Neumann and Dirichlet eigenvalues on $N$ uncoupled intervals $\alpha_k$ and $\omega_k$ respectively, i.e. $\alpha_k\leq \lambda_k \leq\omega_k$
for all $k\in\NN$, we can state the
\begin{cor}
\label{assymm} For the eigenvalues of the Laplacian  associated to the problem $\rm(EP_{Y,0})$  the following spectral asymptotic hold
$$\lim_{k\to\infty}\frac{\lambda_k}{k^2}=\frac{\pi^2}{N^2},
$$
where $\l_k$ denotes the $k$--th eigenvalue, and where the eigenvalues are counted according to their (geometric = algebraic) multiplicities.
\end{cor}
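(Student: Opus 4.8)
The plan is to bracket the eigenvalues $\lambda_k$ of $\rm(EP_{Y,0})$ between two explicitly known sequences and then conclude by a squeeze argument. The interlacing $\alpha_k\le\lambda_k\le\omega_k$ recalled just above is the decisive input. It follows from the chain of form-domain inclusions $H^1_0(0,1;H)\subseteq V_Y\subseteq H^1(0,1;H)$ — the first inclusion holds because $(f(0),f(1))=(0,0)\in Y$ for every $f\in H^1_0$ — together with the observation that, since $R=0$, the form acts on all three domains by the single expression $a_0(f,g)=\int_0^1 (f'(x)|g'(x))_H\,dx$. The min-max principle then gives smaller eigenvalues on the larger (Neumann) domain and larger eigenvalues on the smaller (Dirichlet) domain, which is exactly the two-sided bound. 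Here $\alpha_k$ and $\omega_k$ denote the eigenvalues, counted with multiplicity and arranged non-decreasingly, of the Neumann and Dirichlet second derivative on the $N$ pairwise uncoupled copies of $(0,1)$, with $N=\dim H$.

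First I would write down these decoupled spectra explicitly. The Dirichlet operator on $N$ unit intervals has eigenvalues $n^2\pi^2$, $n\in\NN$, each of multiplicity $N$, while the Neumann operator has eigenvalues $n^2\pi^2$, $n\ge 0$, again each of multiplicity $N$; the two ladders differ only by the $N$-fold eigenvalue $0$ at the bottom of the Neumann ladder, i.e.\ by a shift of the counting by exactly $N$ positions. Accordingly, writing $k=(n-1)N+r$ with $1\le r\le N$ one has $\omega_k=n^2\pi^2$, and writing $k=nN+r$ with $1\le r\le N$ one has $\alpha_k=n^2\pi^2$.

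Then I would compute the two normalised limits. Because $((n-1)N+r)/n\to N$ and $(nN+r)/n\to N$ as $n\to\infty$, uniformly in the bounded remainder $r$, both ratios obey
$$\frac{\omega_k}{k^2}=\frac{n^2\pi^2}{((n-1)N+r)^2}\longrightarrow\frac{\pi^2}{N^2}\qquad\hbox{and}\qquad\frac{\alpha_k}{k^2}=\frac{n^2\pi^2}{(nN+r)^2}\longrightarrow\frac{\pi^2}{N^2}.$$
Dividing the interlacing inequality by $k^2$ and letting $k\to\infty$, the squeeze theorem forces $\lambda_k/k^2\to\pi^2/N^2$, which is the claim. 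Since $R=0$ is positive semidefinite, Proposition~\ref{multipl0} guarantees that the spectrum is real, nonnegative and purely discrete, so the enumeration $\l_k$ with geometric (= algebraic) multiplicities is well defined and exhausts the spectrum.

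The step that needs the most care is not analytic but combinatorial: correctly matching the index $k$ to its block $n$ for each ladder, since the Neumann and Dirichlet enumerations are offset by precisely $N$ (the Neumann ladder opens with an $N$-fold zero). Once this offset is handled, both normalised sequences are seen to share the single limit $\pi^2/N^2$, and no finer Weyl remainder estimate is required. An essentially equivalent alternative would be to pass through the counting function $N(\lambda)\sim\frac{N}{\pi}\sqrt{\lambda}$ for the decoupled problems and invert it, but the squeeze via the already-stated interlacing is the shortest route given what is available.
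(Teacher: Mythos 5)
Your proof is correct and follows essentially the same route as the paper, which simply invokes the interlacing $\alpha_k\le\lambda_k\le\omega_k$ with the decoupled Neumann and Dirichlet eigenvalues and states the corollary without further detail. You have merely made explicit what the paper leaves implicit — the form-domain inclusions $H^1_0(0,1;H)\subseteq V_Y\subseteq H^1(0,1;H)$, the min-max comparison, the bookkeeping of the index $k$ against the block $n$ in each $N$-fold ladder, and the final squeeze — all of which is sound (with the understood identification $N=\dim H$, i.e.\ the number of edges in the network setting).
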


Let now $R=0$. We close this section focusing on the special case of $H={\mathbb C}$, i.e., of an elliptic problem concerning only one interval.
As long as we want to keep locality of the boundary conditions, there are only four possibilities:
\begin{itemize}
\item Dirichlet boundary conditions in both $0$ and $1$;
\item Neumann boundary conditions in both $0$ and $1$;
\item Dirichlet boundary condition in $0$ and Neumann boundary condition in $1$;
\item Neumann boundary condition in $0$ and Dirichlet boundary condition in $1$.
\end{itemize}
They correspond to the boundary conditions in $\rm(EP_Y)$ by means of four different subspaces $Y$ of $H\times H={\mathbb C}^2$: these are respectively
\begin{itemize}
\item $Y=\{0\}$;
\item $Y={\mathbb C}^2$;
\item $Y=$ subspace spanned by the vector $(0,1)$;
\item $Y=$ subspace spanned by the vector $(1,0)$.
\end{itemize}
If we generalize the above setting in order to allow for nonlocal interactions between the vertices at $0$ and $1$, we are actually performing an identification between two vertices: we might as well regard this setting as a differential operator on a loop around a vertex $v$. 

The boundary conditions in $\rm(EP_Y)$ are now given by a subspace of ${\mathbb C}^2$: besides the same four boundary conditions appearing in the interval case there are infinitely many new ones. All such boundary conditions interpolate between the Dirichlet--Dirichlet and the Neumann--Neumann case, i.e., $V_Y$ contains the form domain corresponding to the the Dirichlet case (i.e., $H^1_0(0,1)$) and is contained in the form domain corresponding to the Neumann case (i.e., $H^1(0,1)$). In particular, the operator associated with $(a_0,V_Y)$ is dominated by (resp., dominates) the second derivative with Dirichlet (resp., Neumann) boundary conditions, in the sense of self-adjoint operators. 

It seems that only a few publications are devoted to the study of differential operators on a loop. Among them, we mention~\cite{lus02}, where a characterization of a certain class of second order self-adjoint elliptic  operators is presented.

Already Proposition~\ref{multipl0} shows that the second derivative $\Delta_{Y,0}$ with associated eigenvalue problem $\rm(EP_{Y})$  is \emph{not} invertible if and only if $Y$ contains vectors of the form $(A,A)$ for some $A\in H$. In the case $H=\mathbb C$ this means that $\Delta_{Y,0}$ is  \emph{not} invertible in exactly two cases: $Y={\mathbb C}^2$ and the space $Y$ spanned by the vector $(1,1)$,
corresponding to (uncoupled) Neumann boundary conditions continuity/Kirchhoff conditions at the endpoints, respectively. In this section we are going to describe the spectrum of $\rm(EP_Y)$ more explicitly.

We focus on the boundary conditions defined by a 1-dimensional subspace $Y$ and thus neglect the trivial cases of $Y=\{0\}^2$ or $Y={\mathbb C}^2$ (standard Dirichlet or Neumann boundary conditions, whose associated spectrum is well-known). We can then consider conditions of the same form as in $\rm(EP_Y)$ with
$$Y\equiv Y_\alpha:=\left\langle \begin{pmatrix}
\alpha\\ 1
\end{pmatrix}\right\rangle\qquad \hbox{and}\qquad Y^\perp\equiv Y^\perp_\alpha=Y_{-\bar\alpha^{-1}}
$$
The associated orthogonal projections are given by
\begin{equation}\label{projformula}
P_Y=\kappa\begin{pmatrix}
\alpha & 1\\ 1 & \frac{1}{\alpha}
\end{pmatrix}\qquad \hbox{and}\qquad
P_{Y^\perp}=\kappa\begin{pmatrix}
 \frac{1}{\bar\alpha} & -1 \\ -1 & \bar\alpha
\end{pmatrix}
\end{equation}
for
$$\kappa:=\frac{\Real \alpha}{1+\Real \alpha}.$$ 


\begin{rem}
In order to consider the totality of possible boundary conditions it seems at first that we have to allow all $\alpha\in {\mathbb C}\cup \{\infty\}$, but in fact the eigenvalue problem $\rm(EP_{Y_\alpha})$, for some $|\alpha|>1$, can be equivalently realized (i.e., we have isospectrality) by applying the parity transformation 
$$u(1/2+x)\mapsto u(1/2-x),\qquad x\in [0,1],$$ 
and then switching to the eigenvalue problem $\rm(EP_{Y_{\alpha^{-1}}})$.
\end{rem}

\begin{prop}\label{main}
For all $\alpha\in \mathbb C$ the spectrum of $\rm(EP_{Y_\alpha})$
is given by
$$\left\{\lambda\ge 0 \setprop \cos\sqrt{\lambda}=\frac{2\Real \alpha}{1+|\alpha|^2}\right\}.$$
All the eigenvalues are simple for all $\alpha\not=\pm 1$. All the eigenvalues have multiplicity 2 for $\alpha=\pm 1$.
\end{prop}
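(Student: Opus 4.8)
The plan is to apply Proposition~\ref{multipl0} directly to the one-dimensional setting $H=\mathbb C$ with $R=0$, and then read off the secular equation from the rank condition for the explicit $1$-dimensional subspace $Y_\alpha$. First I would invoke Proposition~\ref{multipl0}(2): since $R=0$ is positive semidefinite, the spectrum is contained in $[0,\infty)$ and a value $\lambda>0$ is an eigenvalue precisely when the system~\eqref{PYrankgennoB} has a nonzero solution $(A,B)\in\mathbb C\times\mathbb C$. Because $H=\mathbb C$ and $\dim Y_\alpha=1$, both $P_{Y_\alpha}$ and $P_{Y_\alpha^\perp}$ are the explicit $2\times 2$ matrices recorded in~\eqref{projformula}. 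The case $\lambda=0$ is handled separately by Proposition~\ref{multipl0}(1): $0$ is an eigenvalue iff $Y_\alpha$ contains a vector of the form $(A,A)$, i.e.\ iff $\alpha=1$; this already signals that $\alpha=\pm1$ will be exceptional.

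The core computation is to substitute~\eqref{projformula} into the two projection equations. The first equation $P_{Y_\alpha^\perp}(A,\,A\cos\sqrt\lambda+B\sin\sqrt\lambda)^T=0$ and the second $P_{Y_\alpha}(B,\,A\sin\sqrt\lambda-B\cos\sqrt\lambda)^T=0$ each collapse, since the range of a rank-one projection is one-dimensional, to a single scalar linear relation between $A$ and $B$ (the two rows of each projection matrix are proportional). Writing out $P_{Y_\alpha^\perp}$ acting on the first vector gives, up to the scalar $\kappa$, the condition that $(A,\,A\cos\sqrt\lambda+B\sin\sqrt\lambda)$ be orthogonal to $Y_\alpha^\perp$, equivalently that it lie in $Y_\alpha$; and similarly the second condition forces $(B,\,A\sin\sqrt\lambda-B\cos\sqrt\lambda)\in Y_\alpha^\perp$. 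Thus I would rephrase the whole system geometrically: $(A,B)$ yields an eigenvector iff
\[
\begin{pmatrix}A\\ A\cos\sqrt\lambda+B\sin\sqrt\lambda\end{pmatrix}\in Y_\alpha
\quad\hbox{and}\quad
\begin{pmatrix}B\\ A\sin\sqrt\lambda-B\cos\sqrt\lambda\end{pmatrix}\in Y_\alpha^\perp .
\]
Using $Y_\alpha=\langle(\alpha,1)^T\rangle$ and $Y_\alpha^\perp=\langle(1,-\bar\alpha)^T\rangle$, each membership is a single proportionality equation, leaving a homogeneous $2\times2$ linear system in $(A,B)$. Setting its determinant to zero and simplifying with the Pythagorean identity $\sin^2\sqrt\lambda+\cos^2\sqrt\lambda=1$ should yield exactly $\cos\sqrt\lambda=\dfrac{2\,\Real\alpha}{1+|\alpha|^2}$, which is the asserted secular equation.

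For the multiplicity statement, I would note that an eigenvalue $\lambda>0$ is simple iff the coefficient matrix of the homogeneous $2\times2$ system has rank exactly $1$, and has multiplicity $2$ iff that matrix vanishes identically. The determinant vanishing gives the secular equation; the full matrix vanishing is a strictly stronger, generically impossible condition. The plan is to show that all entries vanish simultaneously only when $\alpha=\pm1$: for $\alpha=1$ one has $Y_1=\langle(1,1)^T\rangle$ (continuity/Kirchhoff) and for $\alpha=-1$ one has $Y_{-1}=\langle(-1,1)^T\rangle$, and in both cases $Y_\alpha^\perp$ coincides with $Y_{\pm1}$ up to the parity swap, collapsing both proportionality conditions so that every $(A,B)$ on the secular set solves the system. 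For these two values the secular equation reads $\cos\sqrt\lambda=\pm1$, so $\sqrt\lambda\in\pi\mathbb Z$, and each such $\lambda$ admits the full two-dimensional eigenspace spanned by $\cos\sqrt\lambda\,x$ and $\sin\sqrt\lambda\,x$; this recovers the classical Neumann ($\alpha=1$) and continuity/Kirchhoff ($\alpha=-1$) pictures, and also meshes with Corollary~\ref{multipl0cor} mapping the spectrum of $\rm(EP_{Y_\alpha})$ onto that of $\rm(EP_{Y_\alpha^\perp})$.

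The main obstacle I anticipate is the bookkeeping at the two degenerate parameters $\alpha=\pm1$, where $\kappa=\tfrac{\Real\alpha}{1+\Real\alpha}$ is either $\tfrac12$ or formally singular, so the projection formulas~\eqref{projformula} must be interpreted carefully (for $\alpha=-1$ the subspace is still well defined even though the displayed $\kappa$ degenerates). Away from these points the argument is a routine determinant computation, but verifying that rank drop to $0$ happens \emph{exactly} at $\alpha=\pm1$, rather than merely there, requires checking that for every other $\alpha$ at least one entry of the $2\times2$ system matrix is nonzero along the secular set — this is where I would be most careful to avoid missing a spurious coincidence of multiplicity two.
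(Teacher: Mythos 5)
Your proposal is correct and follows essentially the same route as the paper: invoke Proposition~\ref{multipl0} (treating $\lambda=0$ separately), reduce the rank condition for the rank-one projections to a homogeneous $2\times2$ system in $(A,B)$ whose determinant gives $\cos\sqrt{\lambda}=\tfrac{2\Real\alpha}{1+|\alpha|^2}$, and observe that multiplicity $2$ occurs exactly when the whole matrix vanishes, i.e.\ at $\alpha=\pm1$ with $\sin\sqrt{\lambda}=0$; your reformulation of the projection equations as membership conditions in $Y_\alpha$ and $Y_\alpha^\perp$ is only a cosmetic variant (and indeed neatly sidesteps the degenerate constant $\kappa$ at $\alpha=-1$). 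The only slip is the side remark labelling $\alpha=1$ as ``Neumann'' and $\alpha=-1$ as ``continuity/Kirchhoff'': in the paper's notation $\alpha=1$ gives the $CK$ (periodic) condition and $\alpha=-1$ the $KC$ (antiperiodic) condition, but this does not affect the argument.
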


\begin{proof}
First of all let us observe that, by Proposition~\ref{multipl0}, $0$ is an eigenvalue if and only if $\alpha=1$.

Now, plugging~\eqref{projformula} into~\eqref{rankcond} and eliminating the linearly dependent rows yields that $\lambda>0$ is an eigenvalue if and only if the matrix
\begin{equation}
\label{matrixalpha}
\begin{pmatrix}
1-\bar \alpha \cos \sqrt{\lambda} & -\bar \alpha \sin \sqrt{\lambda}\\
\sin \sqrt{\lambda} & \alpha-\cos \sqrt{\lambda}
\end{pmatrix}
\end{equation}
is singular, i.e., if and only if
$$2\Real \alpha = \cos\sqrt{\lambda} (1+|\alpha|^2).$$
Moreover, the eigenvalue $\lambda$ has multiplicity 2 if and only if the matrix in~\eqref{matrixalpha} vanishes: this is the case if and only if $\cos \sqrt{\lambda}=1$ and $\alpha=1$, or else $\cos \sqrt{\lambda}=-1$ and $\alpha=-1$.
\end{proof}


\begin{rem}
It follows from Proposition~\ref{main} that the spectrum associated with $Y_1$ (CK-condition, with the notation introduced in the Example~\ref{Ykirch-comp}) is given by
$\{(2k\pi)^2 \setprop k\in \mathbb N_0\}$ for $\alpha= 1,$
while the spectrum associated with $Y_{-1}$ (KC-condition) agrees with
$\{((2k+1)\pi)^2 \setprop k\in \mathbb N_0\}$ for $\alpha= -1$. 
This is of course nothing but a special case of Corollary~\ref{multipl0cor}.
Hence, among all second derivatives with boundary conditions as in $\rm(EP_{Y_\alpha})$, those corresponding to $\alpha=1$ (i.e., to the $CK$-condition) have the smallest possible lowest eigenvalue; and therefore, those corresponding to $\alpha=-1$ (i.e., to the $KC$-condition) have the largest possible lowest eigenvalue. Of course, these considerations have a counterpart in the asymptotics of the associated semigroups that govern the heat equation corresponding with $\rm(EP_{Y_\alpha})$.
\end{rem}

\section{Adjacency calculus and spectral analysis under the $CK$--condition}
\label{adjcal} 
In this section we recall some known results \cite{Bel85,PNE} on the Laplacian on a graph by means of combinatorial
quantities when continuity and Kirchhoff type conditions are imposed in the vertices.
For that purpose, let us introduce some terminology.
For any graph $\Gamma =(V,E,\in)$, the vertex set is denoted by
$V=V(\Gamma)$, the edge set by $E=E(\Gamma)$ and the incidence
relation by $\in \subset V\times E$. The \emph{degree} of a vertex
$v$ is defined by $\gamma(v)= \cd \{e\in E\setprop v\in e\}$.
All graphs considered in this and the following section are
assumed to be nonempty and finite with
$$ n=\# V,\qquad N=\# E$$
and $c=c(\Gamma)$ connected components $\Gamma_1,\ldots,\Gamma_c$. 
\begin{defi}
\begin{align*}
	c^+(\Gamma) &= \# \left\{\Gamma_k\setprop \Gamma_k \text{ bipartite}\right\}\\
	c^-(\Gamma) &= \# \left\{\Gamma_k\setprop \Gamma_k \text{ non-bipartite}\right\}
\end{align*}
\end{defi}

The graphs $\Gamma$ are also assumed to be \emph{simple}, i.e.\ $\Gamma$ contains no loops, and
at most one edge can join two vertices in $\Gamma$. By definition,
a \emph{circuit} is a connected and regular graph of degree $2$. We
number the vertices by $v_1,\ldots, v_n$, the respective degrees
by $\gamma_1,\ldots, \gamma_n$, and the edges by $e_1,\ldots,
e_N$. The \emph{adjacency matrix}
$\AD(\Gamma)=\left(e_{ih}\right)_{n\times n}$ of the graph is
defined by
$$e_{ih}=
\begin{cases}
1  & \text{if } v_i \text{ and } v_h \text{ are adjacent in }\Gamma\\
                 0  & \text{else}\\
\end{cases}
$$
Note that $\AD(\Gamma)$ is indecomposable if and only if $\Gamma$ is
connected. By simplicity, any two adjacent vertices $v_i$ and
$v_h$ determine uniquely the edge $e_s$ joining them, and we can
set
$$s(i,h)=
\begin{cases}
s  & \text{if } e_s\cap V =\{v_i,v_h\},\\
  1  & \text{otherwise.}
\end{cases}
$$
For further graph theoretical terminology we refer to \cite{wi}, and for the algebraic graph theory to~\cite{big,cds}.

The abstract graph $\Gamma$ can be realized as a  \emph{topological graph} in $\RR^m$ with $m\geq 3$, i.e. $V(\G)\subset
\RR^m$ and the edge set consists in a collection of Jordan curves
$E(\G)=\{ \pi_j:[0,1]\to \RR\setprop 1\leq j \leq N\}$.
The trace
of the topological graph leads to the associated \textit{network} or \textit{metric graph}
$G=\bigcup_{j=1}^N\pi_j\left([0,1]\right)$. The arc
length parameter of the edge $e_j$ is denoted by $x_j$. Clearly, for many purposes
the $\pi_j$ have to display certain regularity properties, say $\pi_j\in\CCC^2$, but
for the present context it suffices to identify each edge $e_j$ with $[0,1]$. 
We shall distinguish the \emph{boundary vertices}
$V_b=\{v_i\in V\setprop \gamma_i=1\}$ from the \emph{ramification
vertices}
$V_r=\{v_i\in V\setprop\gamma_i\geq 2\}$.
The \textit{orientation} of the graph $\Gamma$ is encoded in the \emph{signed incidence matrix},
 which in this setting can be written as  $ \DD(\Gamma)=\left(d_{ij}\right)_{n\times N}$ with
\begin{equation*}
\label{orifa} d_{ij}=
\begin{cases}
1  & \text{if } \pi_j(1)=v_i,\\
-1  & \text{if } \pi_j(0)=v_i,\\
0  & \text{otherwise.} \\
\end{cases}
\end{equation*}
In fact, the factors $d_{ij}$ stand for the outer normal derivative
at $v_i$ with respect to $e_{j}$.
Functions on the graph or on the network are mappings $u=\left(u_j\right)_{N\times
1}:[0,1] \to \CC^N$ with edge components
$u_j:[0,1]\to\CC$. Here we and use the abbreviations
$$u_j(v_i):=u_j(\pi_j^{-1}(v_i)),\quad
\partial_j u_j(v_i):=\frac{\partial}{\partial x_j}u_j(x_j)\Bigr|_{\pi_j^{-1}(v_i)}
\quad\hbox{etc.}
$$
It is well--known that $\text{\rm corank}(\Gamma):=\dim_\CC\ker\DD(\Gamma)=N-n+c(\Gamma)$,
and that $\text{\rm corank}(\Gamma)=1$ if and only if $\Gamma$ is \textit{unicyclic}, i.e.
by definition,  $\Gamma$ contains exactly one circuit. In the connected case, $\Gamma$ is
unicyclic if and only if $N=n$.
 
In this short section we consider the standard Laplacian
$$\Delta
=\left(\left(u_j\right)_{N\times
1}\mapsto \left(\partial_j^2 u_j\right)_{N\times
1}\,\right):\CCC^2[0,1]^N\to\CCC[0,1]^N
$$
under the \emph{continuity condition}
\begin{equation}
\label{1004} \forall v_i\in V_r\,:\,\,e_j\cap e_s
=\{v_i\}\;\Longrightarrow \;u_j(v_i)=u_s(v_i),
\end{equation}
and under  the \emph{Kirchhoff flow condition}
\begin{equation}
\label{gkc} \sum_{j=1}^N d_{ij} 
\partial_j u_j
(v_i)=0  \quad \text{for}\quad 1\leq i\leq n.
\end{equation}
As anticipated in Section~\ref{eigen1}, we shall refer to Conditions~\eqref{1004} and~\eqref{gkc} as to the \textit{$CK$--condition}.
It can be readily written as a canonical boundary condition in the sense of H\"older as follows, see \cite{be2}.
Define the matrix
$$\mathbf{S_2}=
\begin{pmatrix}P_{00} &P_{01}\\
P_{10} &P_{11}\end{pmatrix}
_{2N\times 2N}$$
with matrices $P_{\a\b}=(p_{\a\b jk})_{N\times N}$ defined by
$$p_{\a\b jk}=\begin{cases}
1 &\text{if } \pi_j(\a l_j)=\pi_k(\b l_k),\\
0 &\text{otherwise}.\end{cases}$$
The matrix $\mathbf{S}_2$ is symmetric and decomposable into a block diagonal 
matrix with $n$ blocks, each of them being a dyad of the form 
$\be_{\g_i}\be_{\g_i}^*$, where throughout $\be_k=(1)_{k\times 1}$. Let $\mathbf{S}_1$ denote the orthogonal 
projection onto the kernel of $\mathbf{S}_2$. In detail, for $d_{ij}=-1$ or 
$d_{i,j-N}=+1$ with $1\leq j\leq 2N$ and $z=(z_j)_{2N\times 1}$ we have
$$(\mathbf{S}_1 z)_j=z_j-\,\frac{1}{\g_i} \sum \{z_k\setprop 1\leq k\leq 
2N,d_{ik}=-1\;\hbox{or}\; d_{i,k-N}=+1\}\; .$$
Now the continuity condition at ramification vertices reads $\mathbf{S}_1 \begin{pmatrix}u(0)\\u(1)\end{pmatrix}=0$, and
the $CK$--Condition takes the form
$$\begin{pmatrix}u(0)\\u(1)\end{pmatrix}\in Y:= \ker\mathbf{S}_1,\qquad
\begin{pmatrix}-u'(0)\\u'(1)\end{pmatrix}\in Y^\perp=\ker\mathbf{S}_2.
$$
This shows that we are in fact inside the general setting presented in Section~\ref{eigen1}.
It is well known that the eigenvalues of
$-\Delta$ are nonnegative, see e.g.\ Proposition~\ref{multipl0} or \cite{Bel85}.
Following the transformations in
\cite{Bel85,PNE} the eigenvalue problem for $\Delta$ is equivalent
to the matrix differential boundary eigenvalue problem
\eqref{mevp1}--\eqref{mevp6} below incorporating the adjacency
structure of the network. For that purpose we recall that the
Hadamard product of matrices of the same size is defined as
$\left(a_{ik}\right) \star \left(b_{ik}\right)=\left(a_{ik}
b_{ik}\right)$. 
For a function $u:G \to \RR$ denote its value
distribution in the vertices by
\begin{equation}
\label{nodi} \f={\bf n}(u)=\left(u(v_i)\right)_{n\times 1},
\end{equation}
and for $x\in[0,1] $ define the matrix
$$
\uu(x)=\left(u_{ih}(x)\right)_{n\times n}$$ by
\begin{equation}
\label{tra} u_{ih}(x)=e_{ih}u_{s(i,h)}\left(\frac{1+d_{is(i,h)}}{2}-xd_{is(i,h)}\right).
\end{equation}
Then the eigenvalue problem for $\Delta$ under~\eqref{1004} and~\eqref{gkc} reads as follows.\\
\begin{eqnarray}
u_{ih}\in C^2([0,1])& &\text{ for all } 1\leq i,h\leq n\label{mevp1}\\
 e_{ih}=0\Rightarrow u_{ih}=0& &\text{ for all } 1\leq i,h\leq n\label{mevp2}\\
\uu''=-\lambda \uu & &\text{ in } [0,1]\label{mevp3}\\
 \uu(0)= \f \,\be^* \star \AD & &\text{ continuity in }
V_r(\Gamma)\label{mevp4}\\
 \uu^t(x)=\uu(1-x) & &\text{ for }
x\in[0,1]\label{mevp5}\\
 \uu'(0)\be=0 & &\text{ Kirchhoff flow condition }\label{mevp6}
\end{eqnarray}
\\
\noindent Set
$$\Phi:=\uu(0)=\f \,\be^* \star \AD,\qquad \Psi:=\uu'(0).$$
Finally, introduce the row--stochastic \emph{transition} matrix
\begin{equation}\label{zz}
\ZD:=\hbox{\rm
Diag}\left(\AD\,\be\right)^{-1}(\AD),
\end{equation}
that has only real eigenvalues \cite{Bel85} and plays a key role in the characterization  of the spectrum of
the Laplacian. As for the multiplicities recall the
following, which has been obtained in~\cite{Bel85,PNE}.

\begin{theo}\label{mgeoalg} If $\lambda$ is an eigenvalue of $-\Delta$ in
$\CCC_K^2(G)$ and $\f\in\RR^n$ is a vertex distribution of an
eigenfunction belonging to $\lambda$, then
\begin{equation}\label{3100ck}
\ZD \f = \cos{\sqrt\lambda}\,\f.
\end{equation}
Conversely, if $\cos \sqrt\lambda$ is an eigenvalue of $\ZD$
admitting the eigenvector $\f\in\RR^n$, then $\lambda$ is an
eigenvalue of $-\Delta$ in $\CCC_K^2(G)$ and $\f$ the vertex
distribution of some eigenfunction belonging to $\lambda$. The
multiplicities are
\begin{equation*}
m_g(\lambda)=m_a(\lambda)=
\begin{cases}
c(\Gamma)&\text{ if } \lambda= 0,\\
m_g(\cos\sqrt\lambda,{\ZD})=m_a(\cos\sqrt\lambda,{\ZD})&\text{ if } \sin{\sqrt\lambda}\ne 0,\\
\text{\rm corank}(\Gamma) + c(\Gamma)=N-n+2c(\Gamma)
&\text{ if } \cos{\sqrt\lambda}=1,\,\lambda>0, \\
\text{\rm corank}(\Gamma) + c^+(\Gamma)-c^-(\Gamma)=N-n+2c^+(\Gamma)
&\text{ if } \cos{\sqrt\lambda}=-1,\,\lambda>0.
\end{cases}
\end{equation*}
\end{theo}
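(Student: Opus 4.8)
The plan is to turn the differential eigenvalue problem into a purely algebraic one by substituting the explicit solution of the Helmholtz equation into the matrix boundary value problem \eqref{mevp1}--\eqref{mevp6}, and then to read off the eigenvalue equation for the transition matrix $\ZD$.

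First I would dispose of $\lambda=0$: the form identity already used in the proof of Proposition~\ref{multipl0} forces any null eigenfunction to be edgewise constant, and the continuity condition~\eqref{1004} then makes it constant on each of the $c(\Gamma)$ connected components, so $m_g(0)=m_a(0)=c(\Gamma)$. For $\lambda>0$ put $\omega:=\sqrt\lambda$; solving \eqref{mevp3} entrywise gives $\uu(x)=\Phi\cos\omega x+\tfrac1\omega\Psi\sin\omega x$ with $\Phi=\uu(0)$ and $\Psi=\uu'(0)$. Substituting this into the symmetry relation \eqref{mevp5} and comparing the coefficients of the linearly independent functions $\cos\omega x$ and $\sin\omega x$ produces the two matrix identities
$$\Phi^t=\Phi\cos\omega+\tfrac1\omega\Psi\sin\omega\qquad\hbox{and}\qquad \tfrac1\omega\Psi^t=\Phi\sin\omega-\tfrac1\omega\Psi\cos\omega.$$
Right-multiplying the first identity by $\be$ and using continuity \eqref{mevp4} in the form $\Phi=\f\,\be^*\star\AD$ --- so that $\Phi\be=\hbox{\rm Diag}(\AD\be)\f$ and $\Phi^t\be=\AD\f$ --- together with the Kirchhoff condition \eqref{mevp6}, $\Psi\be=0$, yields $\AD\f=\cos\omega\,\hbox{\rm Diag}(\AD\be)\f$, i.e. $\ZD\f=\cos\sqrt\lambda\,\f$. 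This establishes the first, necessary direction.

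For the converse and for the generic multiplicity I would assume $\ZD\f=\cos\omega\,\f$ and $\sin\omega\neq0$. Then the first matrix identity can be solved uniquely for $\Psi=\tfrac{\omega}{\sin\omega}(\Phi^t-\Phi\cos\omega)$ with $\Phi=\f\,\be^*\star\AD$; a short computation shows that this $\Psi$ automatically satisfies the second identity and that $\Psi\be=\tfrac{\omega}{\sin\omega}(\AD\f-\cos\omega\,\hbox{\rm Diag}(\AD\be)\f)=0$, so both \eqref{mevp5} and the Kirchhoff condition hold and $\uu$ is a genuine eigenfunction. Since $\f=0$ now forces $\Phi=\Psi=0$, the assignment $\f\mapsto\uu$ is a linear isomorphism from the $\cos\sqrt\lambda$-eigenspace of $\ZD$ onto the $\lambda$-eigenspace of $-\Delta$, whence $m_g(\lambda)=m_a(\lambda)=m_g(\cos\sqrt\lambda,\ZD)=m_a(\cos\sqrt\lambda,\ZD)$; here equality of the two multiplicities uses that $-\Delta$ is self-adjoint and that $\ZD$ is similar to the symmetric matrix $\hbox{\rm Diag}(\AD\be)^{-1/2}\,\AD\,\hbox{\rm Diag}(\AD\be)^{-1/2}$.

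The delicate part, and the main obstacle, is the degenerate regime $\sin\omega=0$, i.e. $\cos\sqrt\lambda=\pm1$, where the first matrix identity no longer determines $\Psi$ from $\Phi$ and the $\lambda$-eigenspace is strictly larger than the corresponding eigenspace of $\ZD$. Here I would drop the matrix calculus and argue on the edge parametrization $u_j(x)=a_j\cos\omega x+b_j\sin\omega x$, noting that the value data $(a_j)$ and the derivative data $(b_j)$ now decouple completely. For $\cos\omega=1$ one has $u_j(0)=u_j(1)$ and $u_j'(0)=u_j'(1)$, so continuity forces $\f$ to be constant on each component ($c(\Gamma)$ degrees of freedom) while \eqref{gkc} reduces to $\DD b=0$, contributing $\dim\Ker\DD=\crk(\Gamma)=N-n+c(\Gamma)$ independent degrees of freedom, hence $m(\lambda)=\crk(\Gamma)+c(\Gamma)$. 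For $\cos\omega=-1$ one has $u_j(1)=-u_j(0)$ and $u_j'(1)=-u_j'(0)$, so continuity propagates a sign along each edge and a nonzero vertex distribution survives only on the bipartite components ($c^+(\Gamma)$ degrees of freedom), whereas \eqref{gkc} becomes $\sum_{j\ni v_i}b_j=0$, i.e. $b$ lies in the kernel of the \emph{unsigned} incidence matrix. The crux is to identify the dimension of that kernel as $N-n+c^+(\Gamma)$ --- this is exactly where bipartiteness enters --- which then gives $m(\lambda)=N-n+2c^+(\Gamma)=\crk(\Gamma)+c^+(\Gamma)-c^-(\Gamma)$, using $c=c^++c^-$. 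Establishing the independence of the two families of eigenfunctions and this last dimension count is the only nonroutine step.
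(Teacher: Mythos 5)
Your proof is correct. Note that the paper does not actually prove Theorem~\ref{mgeoalg} itself --- it recalls it from \cite{Bel85,PNE} --- but your argument follows essentially the same adjacency-calculus route that the paper carries out in detail for the dual Theorem~\ref{mak}: insert the fundamental solution $\uu(x)=\Phi\cos(\omega x)+\tfrac1\omega\Psi\sin(\omega x)$ into the symmetry relation \eqref{mevp5}, multiply by $\be$, and use \eqref{mevp4} and \eqref{mevp6} to extract the characteristic equation $\ZD\f=\cos\sqrt\lambda\,\f$, with the generic multiplicity coming from the explicit isomorphism $\f\mapsto\uu$ when $\sin\sqrt\lambda\neq0$. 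The only real difference is cosmetic and concerns the degenerate cases $\cos\sqrt\lambda=\pm1$: the paper's proof of Theorem~\ref{mak} phrases the extra eigenfunctions in terms of the matrix spaces $\MM^{\pm}(\Gamma)$ and the dimension formulas \eqref{6001}--\eqref{6002}, whereas you work directly with the edge coefficients $(a_j,b_j)$ and the kernels of the signed and unsigned incidence matrices; these are the same spaces under the obvious correspondence between an (anti)symmetric matrix supported on the adjacency pattern with zero row sums and an edge vector, and the nullity $N-n+c^+(\Gamma)$ of the unsigned incidence matrix that you correctly single out as the crux is precisely the classical fact (cf.\ \cite{van76} and Remark~\ref{rem:tutte}) that the paper also invokes. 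Your observation that the value data and derivative data decouple completely when $\sin\sqrt\lambda=0$ settles the independence of the two families, so no gap remains.
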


\begin{rem}
In other words, in the relevant case of a connected graph the spectrum of $-\Delta$ under the $CK$-condition can be partitioned
as
$$\sigma(-\Delta)=\{0\} \cup \sigma_{i}\cup\sigma_s,$$
where
\begin{equation*}
\sigma_i :=\left\{\left( 2\ell\pi \pm \mathrm{arc}\cos \alpha
\right)^{2}\setprop \alpha\in\sigma(\mathcal{Z})\setminus \{-1,1\} \text{
and }\ell\in\mathbb{Z} \right\} ,
\end{equation*}
and
\begin{equation*}
\sigma_s:= \{k^2\pi^2\setprop k\in\mathbb{Z}\setminus \{0\}\}.
\end{equation*}
As the elements of $\sigma_i$ are solely determined by the adjacency structure of the graph, they are sometimes called ``immanent eigenvalues''. For the multiplicities of the eigenvalues we have for all $k\in \mathbb Z\setminus\{0\}$
\begin{enumerate}
\item $m(0)=1$; \item $m(\lambda)=\dim \ker \left( \cos\sqrt{\lambda }+\mathcal{Z}\right) $ for $\lambda
\in \sigma_i$; \item $m(k^2\pi^2)=N-n+2$ if $\Gamma$  is bipartite and
$m((2k+1)^2\pi^2)=N-n$ if $\Gamma$ is not bipartite; \item $m(4k^2\pi^2)=N-n+2$.
\end{enumerate}
\end{rem}

A spectral asymptotics of Weyl type follows promptly.

\begin{cor}(\cite{Bel85,PNE})
\label{aslapcan} For the canonical Laplacian $-\Delta$ under
the $CK$--condition the spectral asymptotics
$$\lim_{k\to\infty}\frac{\lambda_k}{k^2}=\frac{\pi^2}{N^2}
$$
holds, where $\l_k$ denotes the $k$--th eigenvalue, and where the eigenvalues are counted according to their (geometric = algebraic) multiplicities.
\end{cor}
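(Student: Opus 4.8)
The plan is to extract the stated Weyl law from the explicit spectral data already at our disposal, and I would record two routes. The quickest is to observe that the $CK$--Laplacian $-\Delta^{CK}$ is nothing but the realization of $-\Delta_{Y,0}$ on $H=\CC^N$ corresponding to the closed subspace $Y=\mathrm{Range}\,\tilde{\DD}=\ker\mathbf{S}_1$. Since the associated form domain satisfies $H^1_0(0,1;\CC^N)\subseteq V_Y\subseteq H^1(0,1;\CC^N)$, the min--max principle forces $\alpha_k\le\lambda_k\le\omega_k$, where $\alpha_k$ and $\omega_k$ are the Neumann and Dirichlet eigenvalues on the $N$ uncoupled unit intervals. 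On those $N$ decoupled copies of $(0,1)$ both families $\{(j\pi)^2:j\ge 0\}$ (Neumann) and $\{(j\pi)^2:j\ge 1\}$ (Dirichlet) occur with multiplicity $N$, so their counting functions are each $\tfrac{N}{\pi}\sqrt\Lambda+O(1)$ and hence $\alpha_k/k^2\to\pi^2/N^2$ and $\omega_k/k^2\to\pi^2/N^2$. Squeezing gives the claim; this is precisely Corollary~\ref{assymm} specialized to $Y=\mathrm{Range}\,\tilde{\DD}$.

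For a self--contained argument that also reveals how the graph invariants enter, I would instead compute the counting function $\mathcal{N}(\Lambda):=\#\{k:\lambda_k\le\Lambda\}$ (multiplicities included) and prove $\mathcal{N}(\Lambda)=\tfrac{N}{\pi}\sqrt\Lambda+O(1)$, which is equivalent to $\lambda_k\sim\tfrac{\pi^2}{N^2}k^2$. Both $N$ and $\mathcal{N}$ are additive over connected components, so it suffices to treat a connected $\Gamma$ carrying at least one edge. Set $T=\sqrt\Lambda$ and use the partition $\sigma(-\Delta)=\{0\}\cup\sigma_i\cup\sigma_s$ from the Remark following Theorem~\ref{mgeoalg}.

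The immanent part $\sigma_i$ is counted through $\ZD$: each eigenvalue $\alpha\in(-1,1)$ of the transition matrix, of multiplicity $m(\alpha)$, yields the eigenvalues $\lambda$ with $\sqrt\lambda=|2\ell\pi\pm\arccos\alpha|$, two of which fall into every period of length $2\pi$; hence $\tfrac{T}{\pi}m(\alpha)+O(1)$ of them satisfy $\sqrt\lambda\le T$, and summing over the distinct $\alpha\in(-1,1)$ gives a contribution $\tfrac{T}{\pi}n_0+O(1)$, where $n_0$ is the number of eigenvalues of $\ZD$ in $(-1,1)$ counted with multiplicity. By Perron--Frobenius the row--stochastic matrix $\ZD$ has $1$ as a simple eigenvalue for connected $\Gamma$, while $-1$ is an eigenvalue --- necessarily simple --- exactly when $\Gamma$ is bipartite; thus $n_0=n-1$ if $\Gamma$ is non--bipartite and $n_0=n-2$ if $\Gamma$ is bipartite. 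The standard part $\sigma_s$ sits at $\sqrt\lambda=k\pi$ and contributes, by Theorem~\ref{mgeoalg}, multiplicity $N-n+2$ at even $k$ and multiplicity $N-n+2c^+(\Gamma)$ at odd $k$, each parity furnishing $\tfrac{T}{2\pi}+O(1)$ integers $k$ with $k\pi\le T$.

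Adding the two leading terms I would check that the bipartiteness dichotomy washes out: in the bipartite case $\tfrac{T}{\pi}(n-2)+\big[\tfrac{T}{2\pi}(N-n+2)+\tfrac{T}{2\pi}(N-n+2)\big]=\tfrac{T}{\pi}N$, and in the non--bipartite case $\tfrac{T}{\pi}(n-1)+\big[\tfrac{T}{2\pi}(N-n+2)+\tfrac{T}{2\pi}(N-n)\big]=\tfrac{T}{\pi}N$, so that $\mathcal{N}(\Lambda)=\tfrac{N}{\pi}\sqrt\Lambda+O(1)$ in either regime. The main obstacle is exactly this last bit of bookkeeping at the thresholds $\lambda=k^2\pi^2$: one must align the multiplicity jumps encoded by $\mathrm{corank}(\Gamma)$, $c(\Gamma)$ and $c^{\pm}(\Gamma)$ with the eigenvalues $\pm1$ of $\ZD$ that were deliberately removed from $\sigma_i$, ensuring that no eigenvalue is counted twice and that the leading coefficient equals $N$ independently of bipartiteness. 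The min--max route bypasses this delicacy at the cost of hiding precisely the interplay of graph invariants that produces the Weyl constant $N/\pi$.
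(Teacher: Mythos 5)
Your proposal is correct, and it actually contains two valid arguments. The first route (domain monotonicity $H^1_0(0,1;\CC^N)\subseteq V_Y\subseteq H^1(0,1;\CC^N)$, hence $\alpha_k\le\lambda_k\le\omega_k$ with Neumann/Dirichlet comparison spectra on $N$ decoupled intervals, then squeezing) is essentially the paper's own justification: the authors state exactly this interlacing just before Corollary~\ref{assymm} and then declare that the $CK$--case "follows promptly", deferring details to \cite{Bel85,PNE}. Your second route is a genuine addition that the paper leaves implicit: you compute the counting function $\mathcal N(\Lambda)$ directly from the multiplicity table of Theorem~\ref{mgeoalg}, splitting into the immanent part governed by $\sigma(\ZD)\cap(-1,1)$ (contributing $\tfrac{T}{\pi}(n-1)$ or $\tfrac{T}{\pi}(n-2)$ according to bipartiteness, since $1$ is simple by Perron--Frobenius and $-1$ is simple exactly in the connected bipartite case) and the part at $\sqrt\lambda\in\pi\ZZ$ with multiplicities $N-n+2$ and $N-n+2c^+(\Gamma)$; the bookkeeping you carry out, showing that the bipartiteness dichotomy cancels and the leading coefficient is $N/\pi$ in either case, is exactly the verification that makes "follows promptly" honest. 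What each approach buys is as you say: the min--max route is shorter and works for arbitrary $Y$ (it is literally Corollary~\ref{assymm}), while the counting route exhibits how the graph invariants $n$, $N$, $c^{\pm}(\Gamma)$ and $\mathrm{corank}(\Gamma)$ conspire to produce the Weyl constant, and is the argument that actually generalizes to recovering $N$ from the spectrum as in Theorem~\ref{recover}. The only point worth flagging is minor: in the non--bipartite case one should note $N\ge n$ (a non--bipartite connected graph contains a cycle), so the multiplicity $N-n$ at $\cos\sqrt\lambda=-1$ is indeed nonnegative; and additivity of both $N$ and $\mathcal N$ over components justifies the reduction to connected $\Gamma$, as you observe.
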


By Theorem~\ref{recover} below, a necessary conditions for the \emph{whole} spectra of $-\Delta^{CK}$ for two different graphs $\Gamma_1$ and $\Gamma_2$ to be comparable is that both graphs have the same number of vertices, edges, bipartite and non-bipartite components. This is of course a very strong assumption. Thus, a more natural question is whether some relevant spectral subset -- i.e., the lowest non-trivial eigenvalue -- of $-\Delta^{CK}$ is lowered or raised by certain graph operations. For instance, the following comparison result for eigenvalues of $\Delta^{CK}$ is a direct consequence of Theorems~\ref{mgeoalg} and~\ref{mak},~\eqref{transnormlapl} and a result by Chung~\cite[Lemma~1.15]{Chu97}.

\begin{prop}\label{compar-ck}
Let $\Gamma_1,\Gamma_2$ be two connected graphs such that $\Gamma_2$ be  formed by contractions of vertices from $\Gamma_1$. Then, the lowest nontrivial eigenvalue of $-\Delta^{CK}$ on the metric graph $G_1$ associated with ${\Gamma_1}$ is at most as large as  the lowest nontrivial eigenvalue on the metric graph $G_2$ associated with ${\Gamma_2}$.
Hence, the heat equation under $CK$-conditions on $G_1$ converges to the equilibrium faster than on $G_2$.
\end{prop}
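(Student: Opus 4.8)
The plan is to reduce the spectral comparison to a comparison between the second-largest eigenvalues of the associated transition matrices, where the cited result of Chung applies. By Theorem~\ref{mgeoalg}, for a connected graph the nonzero spectrum of $-\Delta^{CK}$ below $\pi^2$ is governed by the eigenvalues of the row-stochastic transition matrix $\ZD$ through the relation $\cos\sqrt{\lambda}\in\sigma(\ZD)$. Since $\ZD$ is row-stochastic with largest eigenvalue $1$ (corresponding, via $\cos\sqrt{\lambda}=1$, to the trivial eigenvalue $\lambda=0$ of $-\Delta^{CK}$ in the connected case), the lowest nontrivial eigenvalue of $-\Delta^{CK}$ corresponds to the \emph{largest} eigenvalue of $\ZD$ strictly less than $1$; call it $\alpha_2(\Gamma)\in\sigma(\ZD)\setminus\{1\}$. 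Because $\lambda\mapsto\cos\sqrt{\lambda}$ is strictly decreasing on $(0,\pi^2)$, the map $\alpha\mapsto(\arccos\alpha)^2$ is strictly decreasing, so a \emph{larger} value of $\alpha_2$ yields a \emph{smaller} lowest nontrivial eigenvalue. Thus the first step is to record that the lowest nontrivial eigenvalue of $-\Delta^{CK}$ equals $(\arccos\alpha_2(\Gamma))^2$ and is a strictly decreasing function of $\alpha_2(\Gamma)$.

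The second step is to translate $\alpha_2(\ZD)$ into the language of the normalized Laplacian used by Chung. The transition matrix $\ZD=\mathrm{Diag}(\AD\be)^{-1}\AD$ is similar (via the symmetrizing diagonal scaling $\mathrm{Diag}(\AD\be)^{1/2}$) to the symmetric matrix $\mathrm{Diag}(\AD\be)^{-1/2}\AD\,\mathrm{Diag}(\AD\be)^{-1/2}$, which is precisely $\Id-\LL$, where $\LL$ is Chung's normalized Laplacian. Hence the eigenvalues of $\ZD$ are $1-\mu$ as $\mu$ ranges over $\sigma(\LL)$, and in particular $\alpha_2(\Gamma)=1-\mu_2(\Gamma)$, where $\mu_2(\Gamma)$ is the smallest nonzero eigenvalue of $\LL$ (the spectral gap). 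Consequently the lowest nontrivial eigenvalue of $-\Delta^{CK}$ is a strictly \emph{increasing} function of the normalized-Laplacian spectral gap $\mu_2(\Gamma)$; I refer to this identification as \eqref{transnormlapl} in the statement.

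The third step invokes Chung's monotonicity result \cite[Lemma~1.15]{Chu97}: contracting vertices (identifying them into a single vertex) does not increase the spectral gap of the normalized Laplacian, i.e.\ $\mu_2(\Gamma_2)\le\mu_2(\Gamma_1)$ when $\Gamma_2$ is obtained from $\Gamma_1$ by such contractions. Combining this with the increasing dependence established in step two gives that the lowest nontrivial eigenvalue on $G_2$ is at least as large as that on $G_1$, which is the asserted inequality. The final sentence about the heat equation follows because the rate of convergence to equilibrium of the $CK$-heat semigroup is governed by the spectral gap of $-\Delta^{CK}$ (the lowest nontrivial eigenvalue), a larger gap meaning faster exponential decay.

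The main obstacle I anticipate is matching conventions precisely: one must verify that Chung's normalized Laplacian, defined for simple graphs via the combinatorial degrees $\gamma_i$, agrees with the matrix arising from $\ZD$ here, where the ``degree'' is read off from the row sums $\AD\be$ of the adjacency matrix; for a simple graph these coincide, so the identification is clean, but the bookkeeping that $\alpha_2<1$ is genuinely the relevant eigenvalue (and not, say, a negative eigenvalue of $\ZD$ closer to $-1$, which would correspond to eigenvalues near $\pi^2$ rather than to the \emph{lowest} nontrivial one) requires care. Since $-\Delta^{CK}$ is nonnegative and its smallest positive eigenvalue corresponds to the largest eigenvalue of $\ZD$ below $1$, this is exactly $\alpha_2$, and the argument closes.
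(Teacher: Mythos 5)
Your overall route is exactly the one the paper intends: Proposition~\ref{compar-ck} is presented there as a direct consequence of Theorem~\ref{mgeoalg}, the identification~\eqref{transnormlapl} between $\sigma(\mathcal Z)$ and $\sigma(\mathcal L)$, and Chung's Lemma~1.15. Your first two steps carry out precisely this reduction, including the correct observations that the lowest nontrivial eigenvalue of $-\Delta^{CK}$ on a connected graph equals $(\arccos\alpha_2)^2$ with $\alpha_2$ the largest eigenvalue of $\mathcal Z$ below $1$, and that this quantity is a strictly increasing function of the normalized--Laplacian spectral gap $\mu_2=1-\alpha_2$.

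The problem is in your third step. Chung's Lemma~1.15 states the opposite of what you quote: if $\Gamma_2$ is formed from $\Gamma_1$ by contracting vertices, then $\mu_2(\Gamma_2)\ge\mu_2(\Gamma_1)$ --- contraction does \emph{not decrease} the spectral gap of $\mathcal L$, because the admissible functions in the Rayleigh quotient for $\Gamma_2$ are exactly those functions on $\Gamma_1$ that are constant on each contracted set (with matching numerators, denominators and orthogonality condition, since the degree of a merged vertex is the sum of the degrees), so the infimum is taken over a smaller set and can only increase. With the direction you state, $\mu_2(\Gamma_2)\le\mu_2(\Gamma_1)$ combined with the increasing dependence from your step two would yield $\lambda_1^{CK}(G_2)\le\lambda_1^{CK}(G_1)$, the \emph{reverse} of the assertion; your concluding sentence of step three is therefore a non sequitur as written. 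The correct chain is $\mu_2(\Gamma_2)\ge\mu_2(\Gamma_1)$, hence $\alpha_2(\Gamma_2)\le\alpha_2(\Gamma_1)$, hence $(\arccos\alpha_2(\Gamma_2))^2\ge(\arccos\alpha_2(\Gamma_1))^2$, which is the claimed inequality. (A further caveat: given $\lambda_1^{CK}(G_1)\le\lambda_1^{CK}(G_2)$ and decay rate $e^{-\lambda_1 t}$, it is $G_2$ whose semigroup converges at least as fast, so the ``larger gap means faster decay'' principle you invoke does not by itself deliver the Proposition's final sentence in the stated direction.)
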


\section{Spectral analysis of the $KC$--condition}\label{sec:antikir}
In this section, we consider the standard Laplacian $$\Delta
=\left(\left(u_j\right)_{N\times
1}\mapsto \left(\partial_j^2 u_j\right)_{N\times
1}\,\right):\CCC^2[0,1]^N\to\CCC[0,1]^N
$$
under the so-called \textit{anti-Kirchhoff condition}, i.e.
\begin{equation}
\label{ak1} \sum_{j=1}^N d_{ij}^2 \ u_j(v_i)=0  \quad \text{for}\quad 1\leq i\leq n
\end{equation}
and
\begin{equation}
\label{ak2} \forall v_i\in V_r\,:\,\,e_j\cap e_s
=\{v_i\}\;\Longrightarrow \;d_{ij}u_j(v_i)=d_{is}u_s(v_i),
\end{equation}
As in Definition~\ref{CKKC} we refer to~\eqref{ak1} and~\eqref{ak2} as the \textit{$KC$--condition}.
In terms of the conditions discussed in Section~\ref{eigen1}, they correspond to
$$\begin{pmatrix}u(0)\\u(1)\end{pmatrix}\in Y:= \ker\mathbf{S}_2,\qquad
\begin{pmatrix}-u'(0)\\u'(1)\end{pmatrix}\in Y^\perp=\ker\mathbf{S}_1,
$$
where $\mathbf{S}_1,\mathbf{S}_2$ are defined as in Section~\ref{adjcal}. 
It follows from Proposition \ref{multipl0} that the eigenvalues of $-\Delta$ under these transition conditions
are nonnegative.
Using the adjacency transforms from Section \ref{adjcal}, the
eigenvalue problem for $\Delta$ reads as follows.\\
\begin{eqnarray}
u_{ih}\in C^2([0,1])& &\text{ for all } 1\leq i,h\leq n\label{akevp1}\\
 e_{ih}=0\Rightarrow u_{ih}=0& &\text{ for all } 1\leq i,h\leq n\label{akevp2}\\
\uu''=-\lambda \uu & &\text{ in } [0,1]\label{akevp3}\\
\uu^t(x)=\uu(1-x) & &\text{ for }
x\in[0,1]\label{akmevp5}\\
\uu(0)\be=0 & & \text{~\eqref{ak1} }\label{akevp6}\\
 \uu'(0)= \psi \,\be^* \star \AD & &\text{~\eqref{ak2} }\label{akevp4}
\end{eqnarray}
\\
\noindent As above, set
$\Phi:=\uu(0)$, $\Psi:=\uu'(0)=\psi \,\be^* \star \AD$ and 
$\ZD=\hbox{\rm
Diag}\left(\AD\,\be\right)^{-1}\AD$. Then we can state the following.

\begin{theo}\label{mak}
1) If $\lambda>0$ is an eigenvalue of $-\Delta$ under the $KC$--condition in
$\CCC^2[0,1]^N$ and $\psi\in\RR^n$ is a vertex distribution of the normal derivatives of an
eigenfunction belonging to $\lambda$, then
\begin{equation}\label{3100}
\ZD \psi = -\cos{\sqrt\lambda}\,\psi.
\end{equation}
Conversely, if $\lambda>0$ and $-\cos \sqrt\lambda$ is an eigenvalue of $\ZD$
admitting the eigenvector $\psi\in\RR^n$, then $\lambda$ is an
eigenvalue of $-\Delta$ under the $KC$--condition in $\CCC^2[0,1]^N$ and $\psi$ the
distribution of normal derivatives of some eigenfunction belonging to $\lambda$. \\
2) The multiplicities of the eigenvalues are given by
\begin{equation*}
m_g(\lambda)=
\begin{cases}
\text{\rm corank}(\Gamma) -c^-(\Gamma)=N-n+c^+(\G)&\text{ if } \lambda =0,\\
m_g(-\cos\sqrt\lambda,{\ZD})=m_a(-\cos\sqrt\lambda,{\ZD})&\text{ if } \sin{\sqrt\lambda}\ne 0,\\
\text{\rm corank}(\Gamma) + c(\Gamma)=N-n+2c(\G)
&\text{ if } \cos{\sqrt\lambda}=-1,\,\lambda>0,\\
\text{\rm corank}(\Gamma) + c^+(\Gamma) - c^-(\Gamma)=N-n +2c^+(\G)
&\text{ if } \cos{\sqrt\lambda}=1,\,\lambda>0.
\end{cases}
\end{equation*}
\end{theo}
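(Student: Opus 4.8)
The plan is to recast the network problem \eqref{akevp1}--\eqref{akevp4} as two algebraic identities for the boundary matrices $\Phi=\uu(0)$ and $\Psi=\uu'(0)$, and then to read off the secular equation~\eqref{3100} and the multiplicities. First I would write the general solution of \eqref{akevp3} (for $\lambda>0$) as
\[
\uu(x)=\Phi\cos\sqrt\lambda\,x+\frac{\sin\sqrt\lambda\,x}{\sqrt\lambda}\,\Psi
\]
and substitute it into the symmetry relation \eqref{akmevp5}. Comparing in $\uu^t(x)=\uu(1-x)$ the coefficients of $\cos\sqrt\lambda\,x$ and of $\sin\sqrt\lambda\,x$ yields the pair
\[
\Phi^t=\cos\sqrt\lambda\;\Phi+\frac{\sin\sqrt\lambda}{\sqrt\lambda}\,\Psi,\qquad \Psi^t=\sqrt\lambda\,\sin\sqrt\lambda\;\Phi-\cos\sqrt\lambda\;\Psi.
\]
It then remains to encode the two $KC$--conditions \eqref{akevp6} and \eqref{akevp4}, which read $\Phi\be=0$ (the Kirchhoff rule on the values) and $\Psi=\psi\,\be^*\star\AD$, i.e.\ $\Psi_{ih}=\psi_i e_{ih}$, where $\psi\in\RR^n$ is exactly the vertex distribution of the normal derivatives, forced to coincide on the edges meeting at each vertex.

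For the generic case $\sin\sqrt\lambda\neq0$ both halves of part~1) follow immediately. Multiplying the second identity on the right by $\be$ and using $\Phi\be=0$ gives $\Psi^t\be=-\cos\sqrt\lambda\,\Psi\be$; since $\Psi^t\be=\AD\psi$ and $\Psi\be=\mathrm{Diag}(\AD\be)\,\psi$, this is precisely $\AD\psi=-\cos\sqrt\lambda\,\mathrm{Diag}(\AD\be)\,\psi$, that is, \eqref{3100}. Conversely, given $\psi$ with $\ZD\psi=-\cos\sqrt\lambda\,\psi$, the second identity can be solved entrywise for $\Phi$, namely
\[
\Phi_{ih}=\frac{\psi_h+\cos\sqrt\lambda\,\psi_i}{\sqrt\lambda\,\sin\sqrt\lambda}\,e_{ih};
\]
this $\Phi$ is automatically consistent with the first identity, and the sole remaining requirement $\Phi\be=0$ collapses, after summation over $h$, once more to $\ZD\psi=-\cos\sqrt\lambda\,\psi$. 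Since $\uu$ is thereby completely determined by $\psi$, and vice versa, the map $\psi\mapsto\uu$ is a linear isomorphism from $\ker(\ZD+\cos\sqrt\lambda)$ onto the $\lambda$--eigenspace, whence $m_g(\lambda)=m_g(-\cos\sqrt\lambda,\ZD)$. As $\ZD$ is similar to the symmetric matrix $\mathrm{Diag}(\AD\be)^{-1/2}\,\AD\,\mathrm{Diag}(\AD\be)^{-1/2}$ it is diagonalizable, so $m_g(-\cos\sqrt\lambda,\ZD)=m_a(-\cos\sqrt\lambda,\ZD)$; and self-adjointness of $-\Delta^{KC}$ gives $m_g(\lambda)=m_a(\lambda)$.

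The delicate part — and the main obstacle — is the degenerate range $\sin\sqrt\lambda=0$, i.e.\ $\cos\sqrt\lambda=\pm1$ with $\lambda>0$, where $\Phi$ is no longer determined by $\psi$ and the eigenspace acquires an extra, purely topological summand. Here I would argue directly on the edge data $u_s(x)=a_s\cos\sqrt\lambda\,x+b_s\sin\sqrt\lambda\,x$: the boundary values then depend only on $a=(a_s)_s$ and the boundary derivatives only on $b=(b_s)_s$, so that the two $KC$--conditions decouple. When $\cos\sqrt\lambda=-1$ continuity forces $\psi_p=\psi_q$ across every edge $(p,q)$, so $\psi$ is constant on each connected component (the $c(\Gamma)$--dimensional eigenspace of $\ZD$ for $+1$), while the Kirchhoff rule on the values becomes the signed condition $\DD a=0$, contributing $\dim\ker\DD=N-n+c(\Gamma)$ eigenfunctions with $\psi=0$; the two families being independent, $m_g=N-n+2c(\Gamma)$. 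When $\cos\sqrt\lambda=1$ continuity instead forces $\psi_q=-\psi_p$ across every edge, i.e.\ $\psi$ is a proper $\pm$--signing of the vertices, non-trivial exactly on the $c^+(\Gamma)$ bipartite components (the $(-1)$--eigenspace of $\ZD$), whereas the Kirchhoff rule turns into the unsigned condition $|\DD|\,a=0$ with $|\DD|:=\DD^++\DD^-$, and $\dim\ker|\DD|=N-\mathrm{rank}\,|\DD|=N-n+c^+(\Gamma)$, giving $m_g=N-n+2c^+(\Gamma)$.

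Finally, for $\lambda=0$ accretivity of the form (Proposition~\ref{multipl0}) forces $u$ to be edgewise constant, so the oscillatory $b$--part disappears and only the unsigned Kirchhoff condition $|\DD|\,a=0$ survives, yielding $m_g(0)=N-n+c^+(\Gamma)=\crk(\Gamma)-c^-(\Gamma)$. The whole argument thus rests on the two standard incidence-rank identities $\mathrm{rank}\,\DD=n-c(\Gamma)$ and $\mathrm{rank}\,|\DD|=n-c^+(\Gamma)$, together with the correct separation of the cycle eigenfunctions (those with $\psi=0$) from the vertex eigenfunctions (those driven by an eigenvector of $\ZD$) in the degenerate range. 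As an independent check, for $\lambda>0$ with $\sqrt\lambda\neq\pi$ all these multiplicities can also be recovered from the already established $CK$--result Theorem~\ref{mgeoalg} through the spectral duality of Corollary~\ref{multipl0cor}, which maps $\lambda$ to $(\pi-\sqrt\lambda)^2$ and thereby turns $\cos\sqrt\lambda$ into $-\cos\sqrt\lambda$, so that the two boundary multiplicities of Theorem~\ref{mgeoalg} reappear with the cases $\cos\sqrt\lambda=1$ and $\cos\sqrt\lambda=-1$ interchanged.
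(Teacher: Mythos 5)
Your argument is correct and, for the generic case $\sin\sqrt\lambda\neq 0$, essentially identical to the paper's: the same fundamental solution, the same two identities extracted from $\uu^t(x)=\uu(1-x)$, and the same elimination leading to $\ZD\psi=-\cos\sqrt\lambda\,\psi$ (your shortcut of right-multiplying the second identity by $\be$ is a slightly cleaner route to the secular equation than the paper's, which first solves for $\Phi$ and then imposes $\Phi\be=0$, but it is the same computation). Where you genuinely diverge is in the degenerate cases $\lambda=0$ and $\cos\sqrt\lambda=\pm 1$: you work directly with the edge coefficients $a_s,b_s$, observe that values and derivatives decouple, and invoke the rank identities $\mathrm{rank}\,\DD=n-c(\Gamma)$ and $\mathrm{rank}(\DD^++\DD^-)=n-c^+(\Gamma)$, whereas the paper stays in the matrix formalism and quotes the dimension formulas for the spaces $\MM^\pm(\G)$ of (anti)symmetric matrices supported on $\AD$ with vanishing row sums. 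These are two descriptions of the same objects ($\MM^-(\G)$ is the edge-to-matrix image of $\ker\DD$, and $\MM^+(\G)$ that of the kernel of the unsigned incidence matrix), so your version is more self-contained while the paper's leans on results already established in \cite{Bel85}; both correctly separate the ``cycle'' eigenfunctions with $\psi=0$ from the ``vertex'' ones driven by the $\pm1$-eigenspaces of $\ZD$, and both need the linear independence of the $\cos$- and $\sin$-families, which you note. Your closing cross-check via the duality $\lambda\mapsto(\pi-\sqrt\lambda)^2$ of Corollary~\ref{multipl0cor} is not in the paper's proof but is consistent with the remarks surrounding Corollary~\ref{akgleichk} and is a worthwhile sanity check. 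The only cosmetic caveat is that condition~\eqref{ak2} is to be read as continuity of the \emph{outer normal derivatives} $d_{ij}\partial_ju_j(v_i)$ (as encoded in~\eqref{akevp4}); you interpret it this way, which is the intended meaning.
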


Let us recall  from \cite{Bel85} that for $\Gamma$ connected, i.e. $c(\G)=1$,
\begin{align}
	&\dim_\RR\MM^-(\G)=\text{corank}(\Gamma)=N-n+1,\label{6001}\\
	&\dim_\RR\MM^+(\G) = \begin{cases}
N-n+1
&\text{ if }  \Gamma \text{ bipartite,}\\
N-n&\text{ if } \, \Gamma \text{ is not
bipartite,}\label{6002}
\end{cases}
\end{align}
where
\begin{align*}
&\MM(\G):=\left\{M\setprop M=\left(m_{ih}\right)_{n\times n},\
\forall i,h\in\left\{1,\ldots,n\right\} :\,
 e_{ih}=0\Rightarrow m_{ih}=0\right\},\\
	&\MM^-(\G):=\left\{M\in \MM(\G)\setprop M^*=-M,\,M\be=0\right\},\\
	&\MM^+(\G):=\left\{M\in \MM(\G)\setprop M^*=M,\,M\be=0\right\}.
\end{align*}

\begin{proof} [Proof of Theorem~\ref{mak}] As $\text{\rm corank}(\Gamma) = \sum_{k=1}^c \text{\rm corank}(\Gamma_k) $, we can confine ourselves to the case $c(\Gamma)=1$. First, let us consider the case
$$\l=0.$$
Then each eigenfunction of~\eqref{akevp1}--\eqref{akevp4} is of the form
\begin{equation}\label{adjantik}
\uu(x)=\Phi+x\Psi\quad\text{ with }\Phi,\Psi\in\MM(\G). 
\end{equation}
Reasoning as in the proof of Proposition~\ref{multipl0}, the slope matrix $\Psi$ has to vanish, 
since $u$ has to be constant on each edge:
$$
\begin{array}{ll}
0=\displaystyle\sum_{j}
\int^{1}_0  \left(\partial_j^2 u_j\right)u_j dt_j &
=\displaystyle -\sum_{j}\int^{1}_0 \left(\partial_j u_j\right)^2 dt_j+
\displaystyle\sum_{j}
\left[u_j\partial_j u_j\right]_0^{1}\\
& = \displaystyle -\sum_{j}\int^{1}_0 \left(\partial_j u_j\right)^2 dt_j
 +\sum_{i} \bigl( \underbrace{d_{ij} \partial_j u_j}_{\text{independent of } j}
\underbrace{\displaystyle \sum_{j}  u_j (v_i)}_{=0}\bigr).\\
\end{array}
$$
Clearly, $\Phi\in\MM^+(\G)$, and any matrix belonging to $\MM^+(\G)$ defines an eigenfunction to $\l=0$. Thus, the eigenspace $E_0(\Gamma,\Delta)$ is isomorphic to $\MM^+(\G)$ which shows the multiplicity formula for $\l=0$.

Now suppose
$\l>0.$
In this case, a fundamental solution of~\eqref{akevp3} is given by
\begin{equation}
\label{fuso}
\uu(x)=\cos(x\sqrt \l)\Phi+ \frac{\sin(x\sqrt \l)}{\sqrt \l}\Psi.
\end{equation}
Thus, 
$$\uu'(x)=-\sqrt{\l}\sin(x\sqrt{\l})\Phi+ \cos(x\sqrt{\l})\Psi,\quad \uu'(1)=-\sqrt{\l}\sin(\sqrt{\l})\Phi+ \cos(\sqrt{\l})\Psi=-\Psi^*$$
and, by~\eqref{akmevp5},
$$\uu(1)=\Phi^*=\Phi\cos\sqrt \l + \frac{\sin\sqrt \l}{\sqrt \l}\Psi. $$
In the case 
$$\sin\sqrt \l\ne 0,$$
we can conclude that
$$\Phi= \frac{1}{\sqrt \l\sin\sqrt \l}
\left(\cos\sqrt \l \ \Psi +  \Psi^* \right)\star\AD,
$$
and, using $\Phi\be=0$ and  $\Psi= \psi \,\be^* \star \AD $, we are led to
$$\left(\AD\star \be \psi^*\right)\be + \cos\sqrt \l\,
\left(\AD\star\psi \be^* \right)\be=0,
$$
or the \textit{characteristic equation}
\begin{equation}
\label{chareqa} \ZD(\Gamma)\psi=-\cos \sqrt\l \ \psi.
\end{equation}
This shows the multiplicity formula for $\sin\sqrt \l\ne 0$. For the remaining case, suppose first
$$\cos\sqrt \l= -1.$$
Then $\Phi^*=-\Phi$, and~\eqref{akevp6} and~\eqref{6001} imply that $m_g(\lambda)\geq N-n+1$,
using eigenfunctions with vanishing matrix $\Psi$.
All eigensolutions vanishing in all vertices are of the form $\frac{\sin(x\sqrt \l)}{\sqrt \l}\Psi$
with a matrix $\Psi=\Psi^*=\psi \,\be^* \star \AD$ that has to be a multiple of $\AD$. This shows
$m_g(\lambda)=N-n+2$. If instead
$$\cos\sqrt \l= 1,$$
then $-\Psi^*=\Psi=\psi \,\be^* \star \AD$ can only be non-vanishing, and eigensolutions of the form $\frac{\sin(x\sqrt \l)}{\sqrt \l}\Psi$
can only exist if $\Gamma$ is bipartite. On the other hand, $\Phi^*=\Phi$, and~\eqref{akevp6} and~\eqref{6002} imply that
the eigenspace $E_{\lambda}(\Gamma,\Delta)$ contains a subspace isomorphic to $\MM^+(\G)$
using eigenfunctions with vanishing matrix $\Psi$. Thus, by~\eqref{6002}, $m_g(\lambda)=N-n+2$
in the bipartite case and $m_g(\lambda)=N-n$ if $\Gamma$ is not bipartite.\end{proof}

\begin{rem}
1) In the first part of Theorem~\ref{mak} we cannot drop the assertion that $\lambda>0$, whether the graph is bipartite or not. This can be seen considering the following graphs
\begin{center}
\begin{tikzpicture}[style=thick]
\draw (-5,0.7) -- (-6.4,0.7);
\draw[fill] (-5,0.7) circle (2pt);
\draw[fill] (-6.4,0.7) circle (2pt);
\draw (0,0) -- (0,1.4) -- (1,0.7) -- cycle;
\draw (0,0) -- (-1,0.7) -- (0,1.4);
\draw[fill] (0,0) circle (2pt);
\draw[fill] (0,1.4) circle (2pt);
\draw[fill] (1,0.7) circle (2pt);
\draw[fill] (-1,0.7) circle (2pt);
\end{tikzpicture}
\end{center}
In the first case, $-1$ is an eigenvalue of $\mathcal Z$ but $0$ is not an eigenvalue of $\Delta^{KC}$. In the second one, $-1$ is not an eigenvalue of $\mathcal Z$ but the function of constant value $(1,-1,0,-1,1)$ is an eigenfunction of $\Delta^{KC}$ for the eigenvalue $0$.

2) Like in the case of $CK$-condition, the above result shows that in the relevant case of a connected graph the spectrum of $\Delta^{KC}$ can be partitioned as
$$\sigma(-\Delta)=\{0\} \cup \sigma_{i}\cup\sigma_s,$$
where
\begin{equation*}
\sigma_i :=\left\{\left( 2\ell\pi \pm \mathrm{arc}\cos \alpha
\right)^{2}\setprop -\alpha\in\sigma(\mathcal{Z})\setminus \{-1,1\} \text{
and }\ell\in\mathbb{Z} \right\} ,
\end{equation*}
is the set of the immanent eigenvalues and the elements of
\begin{equation*}
\sigma_s:= \{k^2\pi^2\setprop k\in\mathbb{Z}\setminus \{0\}\}
\end{equation*}
correspond to the spectrum of the single edge problem.
For the multiplicities of the eigenvalues we have
\begin{enumerate}
\item $m(0)=N-n+1$ if $\Gamma$  is bipartite and $m(0)=N-n$ if $\Gamma$ is not bipartite; \item $m(\lambda)=\dim \ker \left( \cos\sqrt{\lambda }-\mathcal{Z}\right) $ for $\lambda
\in \sigma_i$; \item $m(k^2\pi^2)=N-n+2$ if $\Gamma$  is bipartite and
$m((2k+1)^2\pi^2)=N-n$ if $\Gamma$ is not bipartite; \item $m(4k^2\pi^2)=N-n+2$.
\end{enumerate}
\end{rem}

\begin{rem}\label{rem:tutte} 1) Note that on trees there are no harmonic functions under the $KC$--condition.

2) Bipartite graphs are exactly those graphs that can be endowed with an orientation such that each vertex is either a sink (only incoming
incident edges) or a source (only outgoing incident edges).
In this way harmonic functions with respect to the $KC$--condition are edgewise constant functions that satisfy the Kirchhoff rule at each node, i.e.
 ${\mathbb C}$--flows in the sense of~\cite[\S IX.4]{Tut84}. In fact, these edge distributions are exactly the elements of the null space of the incidence matrix $\DD(\Gamma)$. Now, it is well--known that the nullity of this matrix is $N-n+c(\Gamma)$, cf. e.g.~\cite[\S I.5]{big}.
This yields another proof for the claimed value of $m_g(0)$.

3) In the non-bipartite case, the above characterization fails to hold since the Kirchhoff condition for the potentials on the edges does not take into account
the orientation, while it is essential in the definition of the circuit space. To overcome this problem, observe that harmonic functions with respect to the KC-condition are edgewise constant functions that satisfy the Kirchhoff rule at each node (regardless of the bipartiteness of the graph). In fact, their edge distributions are exactly the elements of the null space of the \emph{unsigned} incidence matrix of $\Gamma$. Now, it is well--known that the nullity of this matrix is $N - n + c^+(\Gamma)$, cf. [32]. This yields another proof for the claimed value of $m_g(0)$.
\end{rem}

In the same way as for the continuity~\eqref{1004} under general consistent Kirchhoff flow conditions \cite{PNE}, the above multiplicity formulae determine the asymptotic behavior of the eigenvalues of the elliptic problem $(EP_{Y,R})$.
\begin{equation}
\label{KCR}
\begin{pmatrix}u(0)\\u(1)\end{pmatrix}\in Y= \ker\mathbf{S}_2,\qquad
\begin{pmatrix}-u'(0)\\u'(1)\end{pmatrix}+R\begin{pmatrix}u(0)\\u(1)\end{pmatrix}\in Y^\perp=\ker\mathbf{S}_1,
\end{equation}
where $R$ is a $2N\times2N$ hermitian matrix.
\begin{cor}
\label{aslapcanak} For the canonical Laplacian $-\Delta$ under the $KC$-condition the spectral asymptotics
$$\lim_{k\to\infty}\frac{\lambda_k}{k^2}=\frac{\pi^2}{N^2}
$$
holds, where $\l_k$ denotes the $k$--th eigenvalue, and where the eigenvalues are counted according to their (geometric = algebraic) multiplicities.
\end{cor}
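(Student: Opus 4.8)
The plan is to obtain the asymptotics by a Dirichlet--Neumann bracketing argument, exactly as was indicated for the $CK$-condition just before Corollary~\ref{assymm}, rather than by re-summing the explicit spectrum. First I would record the two form-domain inclusions
$$H^1_0(0,1;\CC^N)\subseteq V_Y\subseteq H^1(0,1;\CC^N),\qquad Y=\ker\mathbf S_2,$$
the first holding because any $f$ with $f(0)=f(1)=0$ trivially satisfies $(f(0),f(1))=0\in Y$, the second by the very definition of $V_Y$. Since for $R=0$ the form $a_0(f,g)=\int_0^1(f'(x)|g'(x))\,dx$ is literally the \emph{same} expression on all three domains, the min-max principle yields the interlacing
$$\alpha_k\le\lambda_k\le\omega_k\qquad\text{for all }k\in\NN,$$
where $\alpha_k$ and $\omega_k$ denote the $k$-th Neumann and the $k$-th Dirichlet eigenvalue, respectively, of the second derivative on $N$ decoupled unit intervals, counted with multiplicity.

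Next I would evaluate the two reference sequences explicitly. On $N$ uncoupled copies of $[0,1]$ the Dirichlet spectrum is $\{(j\pi)^2\setprop j\ge 1\}$ and the Neumann spectrum is $\{(j\pi)^2\setprop j\ge 0\}$, each value occurring with multiplicity $N$. Ordering with multiplicity gives $\omega_k=(\lceil k/N\rceil\,\pi)^2$ and $\alpha_k=(\lfloor (k-1)/N\rfloor\,\pi)^2$, whence
$$\lim_{k\to\infty}\frac{\alpha_k}{k^2}=\lim_{k\to\infty}\frac{\omega_k}{k^2}=\frac{\pi^2}{N^2}.$$
Dividing the interlacing inequalities by $k^2$ and applying the squeeze theorem then gives the claim at once. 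I regard this as the cleanest route precisely because it sidesteps any delicate bookkeeping of the $KC$-multiplicities.

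As an independent cross-check, one may re-derive the same limit directly from the explicit decomposition $\sigma(-\Delta)=\{0\}\cup\sigma_i\cup\sigma_s$ furnished by Theorem~\ref{mak}. The spectral part $\sigma_s=\{k^2\pi^2\}$ carries bounded multiplicity $N-n+O(1)$, contributing a counting function $\sim (N-n)\sqrt\lambda/\pi$; meanwhile each of the $n$ eigenvalues $\mu$ of $\ZD$ (counted with multiplicity) produces an immanent sequence $(2\ell\pi\pm\arccos(-\mu))^2$, which deposits two square roots per period $2\pi$ and so contributes $\sim\sqrt\lambda/\pi$. Summing gives a total eigenvalue-counting function $N(\lambda)\sim (N/\pi)\sqrt\lambda$, and inverting it reproduces $\lambda_k\sim(\pi^2/N^2)k^2$.

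The only genuinely delicate point is the bookkeeping of multiplicities, so that ``the $k$-th eigenvalue'' is unambiguous: throughout I must count eigenvalues with their (geometric $=$ algebraic) multiplicities as guaranteed by Theorem~\ref{mak}, and I must check that the finite, $\lambda$-independent corrections coming from the bounded multiplicities of $\sigma_s$ and from the value $\lambda=0$ do not disturb the leading order. Since every such correction is $O(1)$ against a count growing like $\sqrt\lambda$, they are asymptotically negligible; in the bracketing argument this subtlety does not even surface, which is why I would present that proof as the primary one. Finally, I would remark that a bounded self-adjoint boundary perturbation $R$ as in~\eqref{KCR} gives, via a standard Ehrling-type estimate $|u(0)|^2+|u(1)|^2\le\e\|u'\|^2+C_\e\|u\|^2$, an infinitesimally form-bounded perturbation of $a_0$; it therefore leaves the leading Weyl term untouched, so the conclusion persists verbatim for the problem $(\mathrm{EP}_{Y,R})$.
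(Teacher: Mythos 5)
Your proposal is correct and follows essentially the same route as the paper: your primary Dirichlet--Neumann bracketing argument is precisely the interlacing $\alpha_k\le\lambda_k\le\omega_k$ that the paper invokes for Corollary~\ref{assymm} (of which the $KC$-case with $R=0$ is a special instance), and your counting-function cross-check is exactly the ``multiplicity formulae determine the asymptotic behavior'' argument the paper alludes to immediately before Corollary~\ref{aslapcanak}. Nothing further is needed.
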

For a general, not necessarily hermitian matrix $R$, one can follow the Liouville transform approach in \cite{bl5} counting
the algebraic multiplicities in order to get the same asymptotic formula. We omit the details here.
It has been shown in \cite{Bel85} that $\Gamma$ is bipartite if and only if $-1$ is an eigenvalue
of the matrix $\ZD$ or, equivalently, the spectrum of $\ZD$ is symmetric with respect to $0$
counting multiplicities. This can be used in various applications.

\begin{cor}
\label{ckequalkc} Suppose that the graph $\Gamma$ is connected. Then the spectra of $-\Delta^{CK}$ and $-\Delta^{KC}$ coincide, counting
multiplicities, if and only if $\Gamma$ is unicyclic and bipartite. 
\end{cor}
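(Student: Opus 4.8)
The plan is to read the corollary directly off the explicit multiplicity formulas of Theorems~\ref{mgeoalg} and~\ref{mak}, comparing the two operators class of eigenvalues by class of eigenvalues. Since $\crk(\Gamma)$, $c(\Gamma)$, $c^+(\Gamma)$, $c^-(\Gamma)$ are additive over connected components and $\Gamma$ is assumed connected, I set $c(\Gamma)=1$, so that $c^+(\Gamma),c^-(\Gamma)\in\{0,1\}$ with $c^+(\Gamma)=1$ exactly when $\Gamma$ is bipartite, and $\crk(\Gamma)=N-n+1$. In this situation both spectra split as $\{0\}\cup\sigma_i\cup\sigma_s$, where the immanent part $\sigma_i$ is governed by the interior eigenvalues of the transition matrix $\ZD$ and $\sigma_s\subseteq\{k^2\pi^2:k\in\ZZ\setminus\{0\}\}$ collects the points with $\cos\sqrt\lambda=\pm1$. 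The whole argument then reduces to matching the geometric ($=$ algebraic) multiplicities at each of these three types of points.

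For necessity I would extract the two required graph conditions by evaluating the formulas at two cleverly chosen eigenvalues. First, at any $\lambda>0$ with $\cos\sqrt\lambda=1$ (say $\lambda=(2\pi)^2$), Theorem~\ref{mgeoalg} gives $m^{CK}(\lambda)=N-n+2$ while Theorem~\ref{mak} gives $m^{KC}(\lambda)=N-n+2c^+(\Gamma)$; as this is genuinely an eigenvalue of $-\Delta^{CK}$ (its multiplicity is $\ge1$ for any connected graph), equality forces $c^+(\Gamma)=1$, i.e.\ $\Gamma$ bipartite. (The point $\lambda=\pi^2$, where $\cos\sqrt\lambda=-1$, yields the same conclusion with the roles reversed.) Knowing bipartiteness, I then compare at $\lambda=0$: here $m^{CK}(0)=c(\Gamma)=1$ whereas $m^{KC}(0)=\crk(\Gamma)-c^-(\Gamma)=N-n+1$, so equality forces $N=n$, which for a connected graph is precisely unicyclicity.

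For sufficiency I would assume $\Gamma$ connected, bipartite and unicyclic, so that $N=n$, $c^+(\Gamma)=1$, $c^-(\Gamma)=0$, and simply verify that all four cases of the two multiplicity formulas agree: at $\lambda=0$ both give $1$; at $\lambda=(2k\pi)^2$ and at $\lambda=((2k+1)\pi)^2$ both give $N-n+2=2$. The only case needing an extra input is the immanent part with $\sin\sqrt\lambda\ne0$, where $m^{CK}(\lambda)=m_g(\cos\sqrt\lambda,\ZD)$ but $m^{KC}(\lambda)=m_g(-\cos\sqrt\lambda,\ZD)$; here I invoke the cited fact from~\cite{Bel85} that for bipartite $\Gamma$ the spectrum of $\ZD$ is symmetric about $0$ counting multiplicities, which gives $m_g(\cos\sqrt\lambda,\ZD)=m_g(-\cos\sqrt\lambda,\ZD)$ and hence makes the immanent eigenvalues coincide.

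The main obstacle — and the only genuinely spectral, as opposed to bookkeeping, ingredient — is precisely this immanent part: since the $CK$-condition encodes $\cos\sqrt\lambda\in\sigma(\ZD)$ while the $KC$-condition encodes $-\cos\sqrt\lambda\in\sigma(\ZD)$, the two immanent spectra are reflections of one another through $\lambda\mapsto(\pi-\sqrt\lambda)^2$, in the spirit of Corollary~\ref{multipl0cor}. They agree if and only if $\sigma(\ZD)$ is symmetric about $0$, and the content of the bipartiteness criterion of~\cite{Bel85} is exactly that this symmetry is equivalent to bipartiteness. Thus the interior spectrum imposes no condition beyond the one already forced at the special points $\cos\sqrt\lambda=\pm1$, and the two displayed constraints $c^+(\Gamma)=1$ and $N=n$ together characterize equality of the spectra, closing the proof.
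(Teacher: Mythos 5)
Your proof is correct and follows essentially the same route as the paper: both arguments read the equivalence directly off the multiplicity formulas of Theorems~\ref{mgeoalg} and~\ref{mak}, using the criterion from \cite{Bel85} that $\sigma(\ZD)$ is symmetric about $0$ if and only if $\Gamma$ is bipartite. If anything, your version is more explicit than the published one, which is quite terse on necessity: you pin down bipartiteness via the multiplicities at $\cos\sqrt{\lambda}=\pm 1$ and then force $N=n$ (unicyclicity) from the comparison at $\lambda=0$, steps the paper's proof leaves implicit.
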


\begin{proof}
If $\Gamma$ is unicyclic and bipartite, then $\text{corank}(\G)=1$ and the assertion follows by
Theorems \ref{mgeoalg} and \ref{mak}. For the reverse implication, by hypothesis, the spectrum of $\ZD$ is symmetric with respect to $0$, which implies that $\Gamma$ is bipartite. 
Moreover, since $\text{corank}(\G)=1$, i.e. $N=n$, and $\G$ contains exactly one circuit. 
\end{proof} 

In the disconnected case, both spectra  coincide counting
multiplicities, if and only if $\Gamma$ is bipartite and each connected component is unicyclic,
since $N=n$ if and only if $\text{corank}(\Gamma)=c(\Gamma)$.

\begin{cor}
\label{akgleichk} The graph $\Gamma$ is bipartite if and only if the network immanent eigenvalues ($\sin\sqrt \l\ne 0$) of
$-\Delta^{CK}$ and $-\Delta^{KC}$ are the same, counted according to their multiplicities. 
\end{cor}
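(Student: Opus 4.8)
The plan is to reduce the equality of the two immanent spectra to a symmetry property of $\sigma(\ZD)$ about $0$, and then to pin that symmetry down using the trace of $\ZD$ together with the bipartiteness criterion of~\cite{Bel85} recalled above. First I would record, directly from Theorems~\ref{mgeoalg} and~\ref{mak}, that for $\lambda>0$ with $\sin\sqrt\lambda\ne0$ the number $\lambda$ occurs in $\sigma(-\Delta^{CK})$ with multiplicity $m_a(\cos\sqrt\lambda,\ZD)$ and in $\sigma(-\Delta^{KC})$ with multiplicity $m_a(-\cos\sqrt\lambda,\ZD)$ (with the convention that multiplicity $0$ means $\lambda$ is not an eigenvalue). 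As $\lambda$ runs through the positive reals with $\sin\sqrt\lambda\ne0$, the quantity $\mu:=\cos\sqrt\lambda$ runs through the whole open interval $(-1,1)$, and for each fixed such $\lambda$ the CK- and KC-multiplicities are $m_a(\mu,\ZD)$ and $m_a(-\mu,\ZD)$ respectively. Hence the two immanent spectra coincide, counted with multiplicities, if and only if $m_a(\mu,\ZD)=m_a(-\mu,\ZD)$ for every $\mu\in(-1,1)$; that is, if and only if $\sigma(\ZD)\cap(-1,1)$ is symmetric with respect to $0$, counting multiplicities.

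For the ``if'' direction I would simply invoke the result of~\cite{Bel85} recalled above: if $\Gamma$ is bipartite then $\sigma(\ZD)$ is symmetric about $0$ with multiplicities, so a fortiori its part in $(-1,1)$ is symmetric and the immanent spectra agree.

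The converse is the step that needs a genuine idea, and here the main point is to upgrade the symmetry of $\sigma(\ZD)\cap(-1,1)$ to symmetry of the \emph{entire} spectrum, endpoints included. The key observation is that, since $\Gamma$ is simple, the matrix $\ZD=\hbox{\rm Diag}(\AD\,\be)^{-1}\AD$ has vanishing diagonal, so $\mathrm{tr}\,\ZD=0$, i.e.\ $\sum_{\mu\in\sigma(\ZD)}\mu\,m_a(\mu,\ZD)=0$. Assuming the immanent spectra coincide, the contribution of the eigenvalues in $(-1,1)$ to this sum cancels by the symmetry just established, leaving $m_a(1,\ZD)-m_a(-1,\ZD)=0$. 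Combined with the symmetry of $\sigma(\ZD)\cap(-1,1)$, this shows that the full spectrum $\sigma(\ZD)$ is symmetric about $0$ counting multiplicities; by the criterion of~\cite{Bel85} this is exactly bipartiteness of $\Gamma$ (note that since $\ZD$ is row-stochastic one has $1\in\sigma(\ZD)$, so the identity even forces $-1\in\sigma(\ZD)$ with matching multiplicity). I expect the only delicate point to be the bookkeeping in the first paragraph -- making the passage from the variable $\lambda$ to $\mu=\cos\sqrt\lambda$ genuinely match eigenvalues and multiplicities without miscounting the branches $\sqrt\lambda=2\ell\pi\pm\arccos\mu$ -- while the trace identity disposes of the endpoints $\pm1$ cleanly.
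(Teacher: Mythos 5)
Your argument is correct and follows the route the paper intends (the corollary is stated without proof, immediately after recalling the criterion from \cite{Bel85} that bipartiteness is equivalent to symmetry of $\sigma(\ZD)$ about $0$ counting multiplicities): Theorems~\ref{mgeoalg} and~\ref{mak} reduce equality of the immanent spectra to symmetry of $\sigma(\ZD)\cap(-1,1)$. You correctly identify the one point the paper leaves implicit -- that symmetry of the interior part must be upgraded to symmetry of the full spectrum including the endpoints $\pm1$ -- and your trace argument ($\mathrm{tr}\,\ZD=0$ by simplicity, hence $m_a(1,\ZD)=m_a(-1,\ZD)$ once the interior contributions cancel, and $1\in\sigma(\ZD)$ forces $-1\in\sigma(\ZD)$) closes it cleanly, also in the disconnected case where $m_a(1,\ZD)=c(\Gamma)$ and $m_a(-1,\ZD)=c^+(\Gamma)$.
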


\begin{rem}

If $\Gamma$ has a component with at least two edges, then $\mathcal Z$ induces an immanent eigenvalue between $0$ and the first non-trivial, non-immanent eigenvalue $\pi^2$. Since in the connected case the heat semigroup under the $CK$-condition always converges towards an equilibrium, while the semigroup under the $KC$-condition only fails to do so if $\G$ is unicyclic with a circuit of odd length, this shows that the rate of convergence of the semigroup is usually determined by connectivity of the graph. Moreover, this eigenvalue coincides with $\mu:=(\arccos \alpha)^2$ and $\nu:=(\arccos \beta)^2$  (for the $CK$-condition and the $KC$-condition, respectively), where $\alpha$ and $\beta$ are the largest and the smallest eigenvalues of $\mathcal Z$ different from $\pm 1$, respectively. Observe that in the connected case, the eigenvalue $1$ is necessarily simple by the Perron--Frobenius theorem. 
Hence, the eigenvalues of the dual pair $\rm(EP_{CK}),(EP_{KC})$ come in pairs and, in the particular case of bipartite graphs, even in quadruples (because then the spectrum of $\mathcal Z$ is symmetric with respect to the origin).

\begin{figure}[h]\label{plotd}
\begin{center}

\begin{tikzpicture}[domain=0:10]
    \draw[->] (-0.2,0) -- (11.2,0) node[right] {$\lambda$};
    \draw[->] (0,-2.4) -- (0,2.4);
    \draw[color=red] plot[id=-cossqrt] function{-2*cos(sqrt(x*pi))} 
        node[right] {$-\cos\sqrt{\lambda}$};
    \draw[color=blue] plot[id=cossqrt] function{2*cos(sqrt(x*pi))} 
        node[right] {$\cos\sqrt{\lambda}$};

\draw (0,2) -- (0,2) node[left=3pt] {$1$};      
 \draw[dotted][color=gray](0,2) -- (10.2,2);

\draw (0,1.414) -- (0,1.414) node[left=3pt] {$\alpha$};
\draw (0,1.2) -- (0,1.2) node[left=3pt] {$\vdots$};
    \draw[dotted][color=gray] (0, 1.414) -- (10.2, 1.414);

\draw (0,-.5) -- (0,-.5) node[left=3pt] {$\vdots$};
\draw (0,-1) -- (0,-1) node[left=3pt] {$\beta$};
    \draw[dotted][color=gray] (0, -1) -- (10.2, -1);
    
\draw (pi ,-.1) -- (pi ,.1) node[below=3pt] {\tiny $\pi^2$};
 \draw[dotted][color=gray] (pi,0) -- (pi,2);
\draw (0,-.1) -- (0,.1) node[left=3pt] { $0$};

\draw[fill][color=black] (0, 2) circle (1.7pt);
\draw[fill][color=blue] (0, 2) circle (.8pt);
\draw[color=blue] (0,0) circle (2.5pt);

\draw[fill][color=black] (0, 1.414) circle (1.5pt);
\draw[fill][color=blue] (pi/16, 1.414) circle (.9pt);
\draw[color=blue] (pi/16, 0) circle (2.5pt);
\draw (pi/16,-.1) -- (pi/16,.1) node[above] {\tiny $\mu$};
 \draw[dotted][color=gray] (pi/16,0) -- (pi/16,1.414);
 
\draw[fill][color=black] (0, -1) circle (1.5pt);
\draw[fill][color=red] (pi/9, -1) circle (.9pt);
\draw[color=red] (pi/9, 0) circle (1.5pt);
\draw (pi/9,-.1) -- (pi/9,.1) node[below=4pt] {\tiny $\nu$};
  \draw[dotted][color=gray] (pi/9,-1) -- (pi/9,0);
  
\draw[fill][color=red] (9*pi/16, 1.414) circle (.9pt);
\draw[color=red] (9*pi/16, 0) circle (1.5pt);
\draw (9*pi/16,-.1) -- (9*pi/16,.1) node[below=1pt] {\tiny $(\pi-\sqrt{\mu})^2$};
  \draw[dotted][color=gray] (9*pi/16,0) -- (9*pi/16,1.414);
  
\draw[fill][color=blue] (4*pi/9, -1) circle (.9pt);
\draw[color=blue] (4*pi/9, 0) circle (2.5pt);
\draw (4*pi/9,-.1) -- (4*pi/9,.1) node[above=1pt] {\tiny $(\pi-\sqrt{\nu})^2$};
  \draw[dotted][color=gray] (4*pi/9,-1) -- (4*pi/9,0);
  
\draw[fill][color=red] (25*pi/16, 1.414) circle (.9pt);
\draw[color=red] (25*pi/16, 0) circle (1.5pt);
\draw (25*pi/16,-.1) -- (25*pi/16,.1) node[below=1pt] {\tiny $(\pi+\sqrt{\mu})^2$};
  \draw[dotted][color=gray] (25*pi/16,0) -- (25*pi/16,1.414);
  
\draw[fill][color=blue] (16*pi/9, -1) circle (.9pt);
\draw[color=blue] (16*pi/9, 0) circle (2.5pt);
\draw (16*pi/9,-.1) -- (16*pi/9,.1) node[above=1pt] {\tiny $(\pi+\sqrt{\nu})^2$};
  \draw[dotted][color=gray] (16*pi/9,-1) -- (16*pi/9,0);
  
\draw[fill][color=blue] (49*pi/16, 1.414) circle (.9pt);
\draw[color=blue] (49*pi/16, 0) circle (2.5pt);
\draw (49*pi/16,-.1) -- (49*pi/16,.1) node[above=1pt] {\tiny $(2\pi-\sqrt{\mu})^2$};
 \draw[dotted][color=gray] (49*pi/16,0) -- (49*pi/16,1.414);

\draw[fill][color=red] (25*pi/9, -1) circle (.9pt);
\draw[fill][color=red] (pi,2) circle (.8pt);
\draw[color=red] (pi,0) circle (1.5pt);
\draw[color=red] (25*pi/9, 0) circle (1.5pt);
\draw (25*pi/9,-.1) -- (25*pi/9,.1) node[below=1pt] {\tiny $(2\pi-\sqrt{\nu})^2$};
 \draw[dotted][color=gray] (25*pi/9,-1) -- (25*pi/9,0);
\end{tikzpicture}\\[5pt]

\begin{tikzpicture}[domain=0:10]
    \draw[->] (-0.2,0) -- (11.2,0) node[right] {$\lambda$};
    \draw[->] (0,-2.4) -- (0,2.4);
    \draw[dotted][color=gray] (0, -1.414) -- (10.2, -1.414);
    \draw[dotted][color=gray] (0, 1.414) -- (10.2, 1.414);
       \draw[dotted][color=gray](0,2) -- (10.2,2);
       \draw[dotted][color=gray] (0,-2) -- (10.2,-2);
    \draw[color=red] plot[id=-cossqrt] function{-2*cos(sqrt(x*pi))} 
        node[right] {$-\cos\sqrt{\lambda}$};
    \draw[color=blue] plot[id=cossqrt] function{2*cos(sqrt(x*pi))} 
        node[right] {$\cos\sqrt{\lambda}$};
\draw (0,2) -- (0,2) node[left=3pt] {$1$};
\draw (0,-2) -- (0,-2) node[left=3pt] {$-1$};
\draw (0,1.414) -- (0,1.414) node[left=3pt] {$\alpha$};
\draw (0,1.2) -- (0,1.2) node[left=3pt] {$\vdots$};
\draw (0,-1) -- (0,-1) node[left=3pt] {$\vdots$};
\draw (0,-1.414) -- (0,-1.414) node[left=3pt] {$-\alpha$};
\draw (pi ,-.1) -- (pi ,.1) node[below=3pt] {\tiny $\pi^2$};
 \draw[dotted][color=gray] (pi,-2) -- (pi,2);
\draw (0,-.1) -- (0,.1) node[below left=3pt] { $0$};
\draw (pi/16,-.1) -- (pi/16,.1) node[below=3pt] {\tiny $\mu$};
\draw (9*pi/16,-.1) -- (9*pi/16,.1) node[below=3pt] {\tiny $(\pi-\sqrt{\mu})^2$};
\draw (25*pi/16,-.1) -- (25*pi/16,.1) node[below=3pt] {\tiny $(\pi+\sqrt{\mu})^2$};
\draw (49*pi/16,-.1) -- (49*pi/16,.1) node[below=3pt] {\tiny $(2\pi-\sqrt{\mu})^2$};

 \draw[dotted][color=gray] (pi/16,-1.414) -- (pi/16,1.414);
  \draw[dotted][color=gray] (9*pi/16,-1.414) -- (9*pi/16,1.414);
  \draw[dotted][color=gray] (25*pi/16,-1.414) -- (25*pi/16,1.414);
  \draw[dotted][color=gray] (49*pi/16,-1.414) -- (49*pi/16,1.414);

\draw[fill][color=black] (0, 2) circle (1.7pt);
\draw[fill][color=blue] (0, 2) circle (.8pt);
\draw[color=blue] (0,0) circle (2.5pt);

\draw[fill][color=black] (0, -2) circle (1.7pt);
\draw[fill][color=red] (0,-2) circle (.8pt);
\draw[color=red] (0,0) circle (1.5pt);

\draw[fill][color=black] (0, 1.414) circle (1.5pt);
\draw[fill][color=blue] (pi/16, 1.414) circle (.9pt);
\draw[color=blue] (pi/16, 0) circle (2.5pt);

\draw[fill][color=black] (0, -1.414) circle (1.5pt);
\draw[fill][color=red] (pi/16, -1.414) circle (.9pt);
\draw[color=red] (pi/16, 0) circle (1.5pt);

\draw[fill][color=red] (9*pi/16, 1.414) circle (.9pt);
\draw[color=red] (9*pi/16, 0) circle (1.5pt);

\draw[fill][color=blue] (9*pi/16, -1.414) circle (.9pt);
\draw[color=blue] (9*pi/16, 0) circle (2.5pt);

\draw[fill][color=red] (25*pi/16, 1.414) circle (.9pt);
\draw[color=red] (25*pi/16, 0) circle (1.5pt);

\draw[fill][color=blue] (25*pi/16, -1.414) circle (.9pt);
\draw[color=blue] (25*pi/16, 0) circle (2.5pt);

\draw[fill][color=blue] (49*pi/16, 1.414) circle (.9pt);
\draw[color=red] (49*pi/16, 0) circle (1.5pt);

\draw[fill][color=red] (49*pi/16, -1.414) circle (.9pt);
\draw[color=blue] (49*pi/16, 0) circle (2.5pt);

\draw[fill][color=red] (pi,2) circle (.8pt);
\draw[color=red] (pi,0) circle (1.5pt);

\draw[fill][color=blue] (pi,-2) circle (.8pt);
\draw[color=blue] (pi,0) circle (2.5pt);
\end{tikzpicture}

\caption{On the abscissa, the eigenvalues of $-\Delta^{CK}$ and $-\Delta^{KC}$ are plotted in blue and red, respectively, in correspondence with the associated eigenvalues of $\mathcal Z$ on the ordinate axis, which are plotted in black. The first plot reflects the case of a non-bipartite graph, for which the spectrum of $\mathcal Z$ is not symmetric with respect to 0; while the latter corresponds to the bipartite case.}
\end{center}
\end{figure}
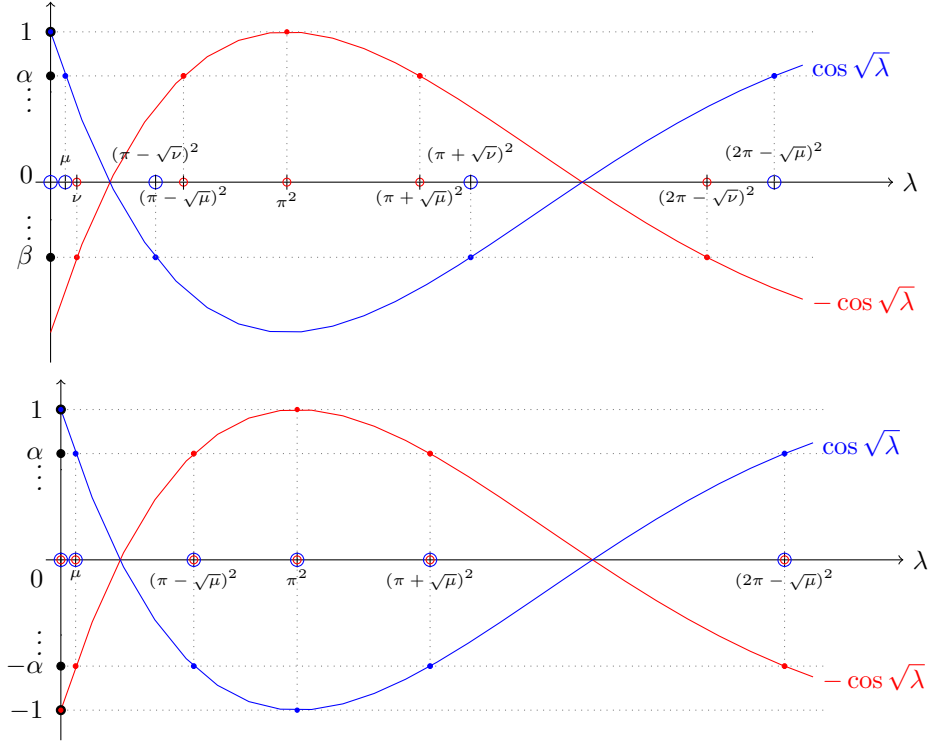

As eigenvalues corresponding
to $\pm \cos\sqrt \l\in\sigma(\ZD)$ are counted in both cases in the same order, the bipartite case
permits the following comparison of the eigenvalues. For trees, we find
$$\forall k\in\NN:\quad\lambda_k^{CK}\leq \lambda_k^{K C}.
$$
If $\Gamma$ is bipartite and unicyclic, i.e. $N=n$ with even circuit, then
$$\forall k\in\NN:\quad\lambda_k^{CK}= \lambda_k^{K C}.
$$
If $\Gamma$ is bipartite with general corank $\geq 2$ , then
$$\forall k\in\NN:\quad\lambda_k^{CK}\geq \lambda_{k+N-n}^{K C} \geq \lambda_{k}^{K C}. 
$$\end{rem}

However, no general uniform comparison seems to be available in the non-bipartite case.

\begin{exa}
\label{complete}
Let $K_n$ denote the complete graph on $n\geq 3$ vertices. Then the eigenvalues in the $CK$--case read
\begin{center}
 \begin{tabular}{|l|}
  \hline
   \\
    {\bf (1)} $\l =0,\; m(0)=1,\;\f=\be $\\
    \\
    \hline
    \\
    {\bf (2)} $\l =4\p^2  k^2,\,k\ne 0,\;m(\l)=2+\frac{1}{2}n(n-3),\;\f=\be $\\
  \\
  \hline
  \\
    {\bf (3)} $ \l =\p^2(2k+1)^2,\,\;m(\l)=\frac{1}{2}n(n-3),\;\f={\bf 0}$\\
  \\
  \hline
  \\
   \textcolor{black}{{\bf (4)} $ \cos  \sqrt{\l}=-(n-1)^{-1}, \quad m(\l)=n-1,\;\f\in\hbox{ker}\,\be\be^* $}\\
   \\
  \hline
    \end{tabular}\\
\end{center}
while in the $K C$--case we obtain the following ones.
\begin{center}
 \begin{tabular}{|l|}
  \hline
   \\
    {\bf (1)} $\l =0,\; m(0)=\frac{1}{2}n(n-3),\;\psi=0 $\\
    \\
    \hline
    \\
    {\bf (2)} $\l =4\p^2  k^2,\,k\ne 0,\;m(\l)=\frac{1}{2}n(n-3),\;\psi=0 $\\
  \\
  \hline
  \\
    {\bf (3)} $ \l =\p^2(2k+1)^2,\,\;m(\l)=2+\frac{1}{2}n(n-3),\;\psi=\be  $\\
  \\
  \hline
  \\
   \textcolor{black}{{\bf (4)} $ \cos  \sqrt{\l}=(n-1)^{-1}, \quad m(\l)=n-1,\;\psi\in\hbox{ker}\,\be\be^* $}\\
   \\
  \hline
    \end{tabular}\\
\end{center}
There is no uniform inequality between the $\lambda_k^{CK}$ and the $ \lambda_{k}^{K C}$. For $n=3$ e.g., $\lambda_0^{CK}< \lambda_{0}^{K C}$, while $\lambda_1^{CK}> \lambda_{1}^{K C}$.
\end{exa}

\begin{rem}
One of the most distinguished features of the characteristic equations for $-\Delta^{CK}$ and $-\Delta^{KC}$ is the r\^ole played by $\mathcal Z$. This becomes even more apparent once we recall that the spectrum of the transition matrix $\ZD$ of a graph essentially agrees (up to a reflection in the point $\frac12$) with the spectrum of the so-called normalized Laplacian $\mathcal L$ defined by
$$\mathcal{L}:=D^{-\frac12} L \,D^{-\frac12}=I-
D^{-\frac12}{\mathcal A} \,D^{-\frac12}= I-
D^{-\frac12}D\ZD \,D^{-\frac12}=D^\frac12\left(I-\ZD\right) D^{-\frac12} ,$$
where we have set
$$L:=\hbox{\rm Diag}\left(\AD\,\be\right)-\AD\quad\text{and}\quad D:=\hbox{\rm Diag}\left(\AD\,\be\right).
$$

Thus, the eigenvalues of $\ZD$ are precisely those of $I-\mathcal{L}$ counting multiplicities, in particular
\begin{equation}\label{transnormlapl}
\lambda \in \sigma (\mathcal Z) \Leftrightarrow 1-\lambda \in \sigma (\mathcal L).
\end{equation}
This correspondence has been routinely exploited to study asymptotics of random walks on graphs, see e.g.~\cite[\S~1.5]{Chu97}, in particular because the matrix $\mathcal L$ has been studied very thoroughly over the last two decades. For our purposes, it is particularly interesting that applying known spectral comparison results for graph operations together with the strong monotony properties of the functions $[0,\pi^2]\ni \lambda\mapsto \pm \cos\sqrt{\lambda}\in [-1,1]$, we can obtain some spectral monotony properties for $-\Delta^{CK}$ and $-\Delta^{KC}$ as well.
\end{rem}

Hence, some comparison results about the spectra of $-\Delta^{CK}$ and $-\Delta^{KC}$ become easily amenable, simply exploiting basic information about the spectra of $\mathcal L$. A curious examples is given in the following, where a special r\^ole is played by a strong kind of symmetry. We recall that a graph is said to be \emph{distance transitive} if for any two pairs of vertices $(x,y),(v,w)$ such that the distances between $x,y$ and between $v,w$ agree there is a graph automorphism mapping $x$ to $v$ and $y$ to $w$, cf.~\cite[\S~20]{big}. E.g., complete graphs, cube graphs, and the Petersen graph have this property.
\begin{prop}
Assume $\Gamma$ to be a distance transitive graph of diameter $d$. Then, both $-\Delta^{CK}$ and $-\Delta^{KC}$ have exactly $2(d+1)$ distinct eigenvalues in each interval $((2k\pi)^2,(2(k+1)\pi)^2]$, $k\in \mathbb N$.
\end{prop}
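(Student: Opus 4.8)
The plan is to reduce the count of distinct eigenvalues of $-\Delta^{CK}$ and $-\Delta^{KC}$ to a count of distinct eigenvalues of the transition matrix $\mathcal Z$, and then to invoke the known spectral theory of distance transitive graphs. By Theorems~\ref{mgeoalg} and~\ref{mak} together with the partition of the spectrum into $\{0\}\cup\sigma_i\cup\sigma_s$, the eigenvalues $\lambda$ with $\sin\sqrt{\lambda}\neq0$ are exactly those for which $\cos\sqrt{\lambda}\in\sigma(\mathcal Z)$ (for $CK$) or $-\cos\sqrt{\lambda}\in\sigma(\mathcal Z)$ (for $KC$), while the special values come from $\cos\sqrt{\lambda}=\pm1$, i.e.\ $\lambda\in\sigma_s\cup\{0\}$. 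So the first step is a clean bookkeeping: over one period $\lambda\in((2k\pi)^2,(2(k+1)\pi)^2]$, $k\in\mathbb N$, the map $\lambda\mapsto\cos\sqrt{\lambda}$ traverses the interval $[-1,1]$ monotonically twice (once decreasing, once increasing), so each value $\alpha\in(-1,1)$ of $\cos\sqrt{\lambda}$ is attained at exactly two points, and each endpoint value $\pm1$ at exactly one point. I would carry out this elementary analysis of the cosine to read off the number of distinct $\lambda$ in the window as a function of the number of distinct eigenvalues of $\mathcal Z$.

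The key input is the classical fact (see~\cite{big,cds}) that a distance transitive graph of diameter $d$ has exactly $d+1$ distinct eigenvalues of its adjacency matrix $\mathcal A$, hence exactly $d+1$ distinct eigenvalues of $\mathcal Z$: distance transitivity forces the graph to be distance regular, so $\mathcal A$ generates the $(d+1)$-dimensional Bose--Mesner algebra and its minimal polynomial has degree $d+1$. Moreover such a graph is regular, so $\mathcal Z=\frac1\gamma\mathcal A$ is a scalar multiple of $\mathcal A$ and its eigenvalues stand in bijection with those of $\mathcal A$; in particular $\mathcal Z$ has the Perron eigenvalue $1$, and it has the eigenvalue $-1$ precisely when $\Gamma$ is bipartite. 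The next step is therefore to record that $\mathcal Z$ has $d+1$ distinct eigenvalues, of which exactly one equals $1$ and at most one equals $-1$.

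Combining these, I would count as follows. Each of the $d+1$ distinct eigenvalues of $\mathcal Z$ in $(-1,1)$ contributes two distinct $\lambda$-values per period via the two monotone branches of $\cos\sqrt{\lambda}$; the Perron value $1$ and, in the bipartite case, the value $-1$ each sit at an endpoint and contribute a single $\lambda$ per period, but these endpoint values of $\cos\sqrt{\lambda}$ coincide with the special eigenvalues $\sigma_s$ which are present for \emph{both} conditions regardless of bipartiteness. The bookkeeping has to be done carefully so that for both $CK$ and $KC$ the total comes out to the same number $2(d+1)$: the two special values $\cos\sqrt{\lambda}=\pm1$ each appear once per period (at $((2k+1)\pi)^2$ and $(2(k+1)\pi)^2$), and the remaining $d-1$ (non-bipartite) or $d+1$ (bipartite) interior eigenvalues of $\mathcal Z$ double up, so that in every case $CK$ and $KC$ together contribute exactly $2(d+1)$ distinct values per window. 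I expect the main obstacle to be precisely this endpoint bookkeeping: one must treat the eigenvalues $\pm1$ of $\mathcal Z$ consistently with the special spectrum $\sigma_s$ and with the role of the reflection $\cos\sqrt{\lambda}\leftrightarrow-\cos\sqrt{\lambda}$ relating $CK$ and $KC$ (Corollary~\ref{multipl0cor}), and verify that bipartite and non-bipartite graphs yield the same count of \emph{distinct} values even though their multiplicity patterns differ.
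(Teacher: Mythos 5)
Your route is the same one the paper takes: reduce the statement, via Theorems~\ref{mgeoalg} and~\ref{mak}, to counting distinct eigenvalues of the transition matrix $\mathcal Z$, and then invoke the classical fact that a distance transitive graph of diameter $d$ is distance regular and therefore has exactly $d+1$ distinct adjacency eigenvalues, hence (by regularity, $\mathcal Z=\gamma^{-1}\mathcal A$) exactly $d+1$ distinct eigenvalues of $\mathcal Z$. That citation is the paper's entire proof; it declares the conclusion a direct consequence and never writes out the translation from eigenvalues of $\mathcal Z$ to eigenvalues of $-\Delta^{CK}$ and $-\Delta^{KC}$ per window. So you have correctly identified both the reduction and the key external input.

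The gap is in that final translation, which you rightly single out as the crux but do not close, and the fragment you do write is wrong. On $\bigl((2k\pi)^2,(2(k+1)\pi)^2\bigr]$ the map $\lambda\mapsto\cos\sqrt{\lambda}$ attains each value of $(-1,1)$ exactly twice and each of $\pm1$ exactly once. The number of eigenvalues of $\mathcal Z$ lying in the \emph{open} interval $(-1,1)$ is $d$ when $\Gamma$ is non-bipartite (only $1\in\sigma(\mathcal Z)$ is excluded) and $d-1$ when $\Gamma$ is bipartite (both $\pm1$ are excluded); you state these two counts the other way around. Since the endpoint values $\cos\sqrt{\lambda}=\pm1$ each contribute at most one $\lambda$ per window (and, by the multiplicity formulae of Theorems~\ref{mgeoalg} and~\ref{mak}, they do contribute for either condition whenever $N>n$ or $\Gamma$ is bipartite), the per-operator count comes out as $2d+2=2(d+1)$ in the non-bipartite case with $N>n$, but as $2(d-1)+2=2d$ in the bipartite case: for the $3$-cube ($d=3$, $\sigma(\mathcal Z)=\{1,\tfrac13,-\tfrac13,-1\}$) one finds $6$ distinct eigenvalues per window rather than $8$. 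So the bookkeeping cannot be completed in the form you sketch so as to yield $2(d+1)$ uniformly, and your closing claim that ``$CK$ and $KC$ \emph{together} contribute exactly $2(d+1)$ distinct values'' is in any case not what the proposition asserts, which is $2(d+1)$ for each operator separately. To make the argument sound you must either restrict to the non-bipartite (non-cycle) case or adjust the asserted count in the bipartite one; as written, the decisive counting step is missing.
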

\begin{proof}
Taking into account  Theorems~\ref{mgeoalg} and~\ref{mak}, the assertion is a direct consequence of~\cite[Thm.~7.10]{Chu97} (or of~\cite[Prop.~21.2]{big}, after observing that vertex transitive graphs are necessarily regular).
\end{proof}
\begin{rem}
If one allows $\mathcal Z$ and hence $-\Delta^{CK},-\Delta^{KC}$ to be weighted, stronger assertions can be proved leading to spectral comparisons between Laplacians on distance transitive $\Gamma$ and on some related path graphs, cf.~\cite[Thms.~7.11 and~7.13]{Chu97}. We will not further elaborate on this point.
\end{rem}

\begin{rem}
Producing results analogous to that in Proposition~\ref{compar-ck} for $-\Delta^{KC}$ is less obvious, since less information seems to be available about the \emph{smallest} eigenvalue of $\mathcal Z$, i.e., about the \emph{largest} eigenvalue of $\mathcal L$ (apart from the bipartite case, of course). However, in the last few years many interlacing results have been obtained for the eigenvalues of $\mathcal L$ under graph operations. For example, it is proved in~\cite[Thm.~1.2]{But07} that if
\begin{itemize}
\item $\Gamma_1$ is a graph on $n$ vertices,
\item $\tilde{\Gamma}_2$ is a subgraph of $\Gamma_1$, 
\item $t$ is the number of isolated vertices of $\tilde{\Gamma}_2$,
\item $\Gamma_2$ is the complement of $\tilde{\Gamma}_2$ in $\Gamma_1$, i.e., the graph obtained by deleting from $\Gamma_1$ the edges of $\tilde{\Gamma}_2$,
\item $\mu_j^{(1)}$ and $\mu_j^{(2)}$ denote  the eigenvalues of the normalized Laplacian of $\Gamma_1$, $\Gamma_2$, respectively,
\end{itemize}
 then for $k=1,\ldots,n$ 
\[
\mu^{(1)}_{k-t+1}\le \mu^{(2)}_{k}\le 
\left\{
\begin{array}{ll}
\mu^{(1)}_{k+t-1},\qquad &\hbox{if $\Gamma_2$ is bipartite,}\\
\mu^{(1)}_{k+t},\qquad &\hbox{otherwise,}\\
\end{array} 
\right.
\]
where $\mu^{(1)}_{-t+1}=\ldots=\mu^{(1)}_0:=0$ and $\mu^{(1)}_{n+1}=\ldots=\mu^{(1)}_{n+t}:=2$. (An analogous relation holds if a graph is added, instead of subtracted~\cite[Cor.~1.4]{But07}). Yet more refined results related to more subtle structures (like coverings and spanning subgraphs) have been recently obtained in the doctoral thesis of Butler~\cite{But08} and in still unpublished lecture notes by Hall~\cite[\S~3]{Hal10}.  In view of Theorems~\ref{mgeoalg} and~\ref{mak}, it is clear how to translate all these interlacing results for the spectrum of $\mathcal L$ into  interlacing results for spectral subsets of $-\Delta^{CK}$ or $-\Delta^{KC}$.  
\end{rem}

\section{Inverse spectral aspects}\label{sec:inverse} 

Just like under the $CK$-condition considered in Section \ref{adjcal}, one cannot recover the graph from  the eigenvalues of $-\Delta^{KC}$. In fact,
one can use the same couple of regular isospectral, but non-isomorphic graphs as in \cite{be6} based on
a well--known example by Hoffman and Ray--Chaudhuri, displayed by the left pair
in Fig.\ref{raych}. The right pair displays the same graphs where in each of both of them the white vertices have to be identified.
They cannot be neither isometric nor isomorphic, since the left graph is planar,
while the right one is not.
\begin{figure}[h]\label{raych}
\centering
\includegraphics[width=0.5\textwidth]{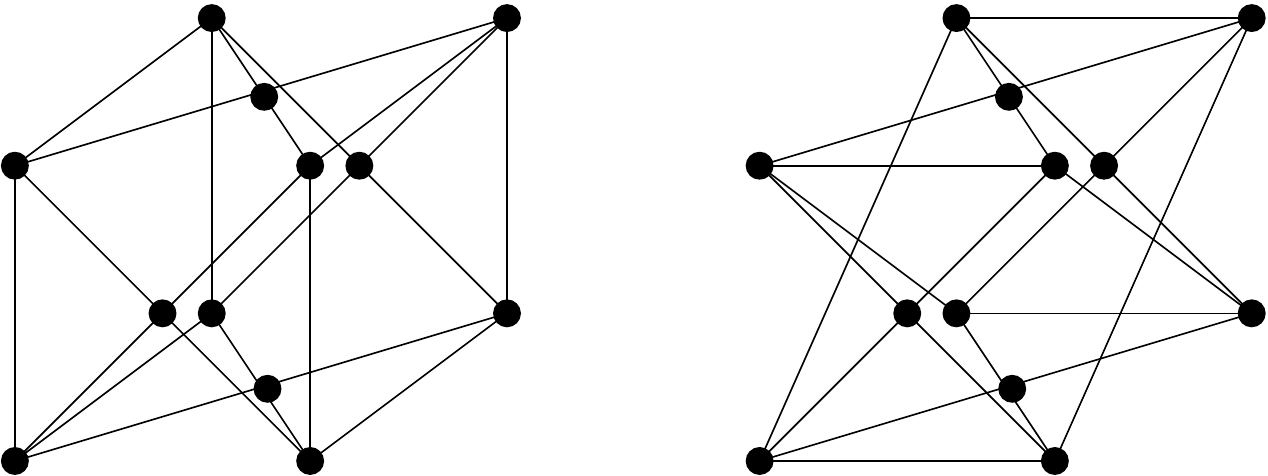}\hskip20mm \includegraphics[width=0.25\textwidth]{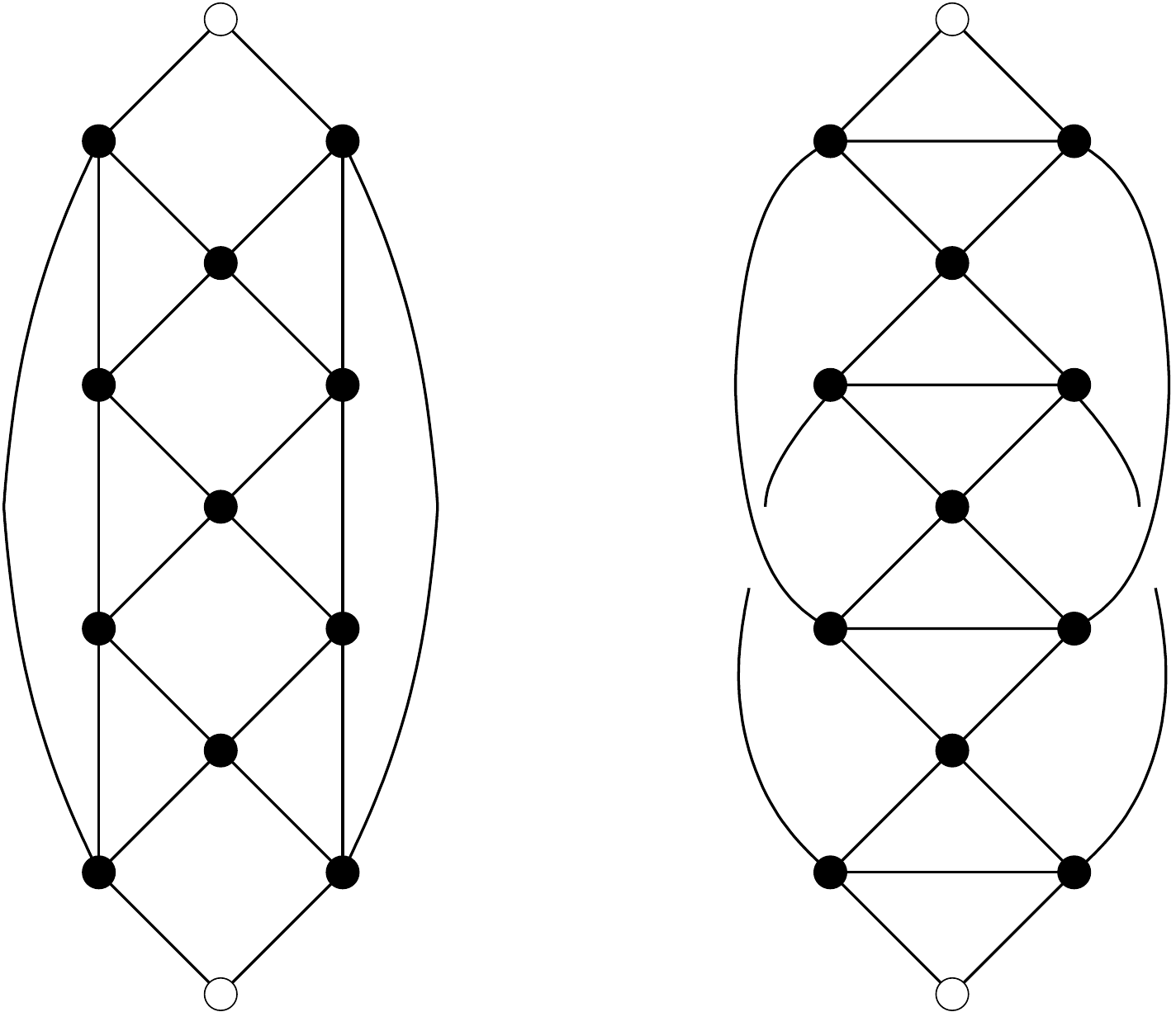}
\caption{Two non-isomorphic, but isospectral graphs under the $KC$--condition.}
\end{figure}
\begin{cor}
\label{onecannot} There are non-isomorphic pairs of regular graphs having the
same point spectrum of the Laplacian $-\Delta$ under
the $KC$--condition.
\end{cor}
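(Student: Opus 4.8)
The plan is to exhibit a concrete pair of non-isomorphic regular graphs whose $-\Delta^{KC}$-spectra coincide, by reducing the spectral comparison for $-\Delta^{KC}$ to a comparison of the transition matrices $\mathcal Z$ together with a handful of combinatorial invariants. By Theorem~\ref{mak}, the point spectrum of $-\Delta^{KC}$, counted with multiplicities, is completely determined by (i) the spectrum of $\mathcal Z$ with its multiplicities, which dictates the immanent eigenvalues through the relation $-\cos\sqrt\lambda\in\sigma(\mathcal Z)$, and (ii) the integers $N$, $n$, $c^+(\Gamma)$ and $c^-(\Gamma)$, which govern the multiplicities of the ``spine'' eigenvalues at $\cos\sqrt\lambda=\pm1$ as well as of $\lambda=0$. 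Hence it suffices to produce two non-isomorphic graphs for which all of these data agree.

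First I would restrict to regular graphs, where the comparison simplifies drastically. If $\Gamma$ is $r$-regular then $\mathrm{Diag}(\mathcal A\,\be)=r I$, so $\mathcal Z=r^{-1}\mathcal A$; consequently two $r$-regular graphs have cospectral transition matrices if and only if they are cospectral with respect to the adjacency matrix $\mathcal A$. I would therefore take the classical pair of non-isomorphic, regular, adjacency-cospectral graphs going back to Hoffman and Ray--Chaudhuri, already used in~\cite{be6} and reproduced in Figure~\ref{raych}.

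Next I would check that this adjacency-cospectrality already forces all the remaining invariants in Theorem~\ref{mak} to coincide. Two such graphs share the same common degree $r$ and the same number of vertices $n$, hence the same number of edges $N=\tfrac12 rn$. For an $r$-regular graph the multiplicity of the adjacency eigenvalue $r$ equals the number $c(\Gamma)$ of connected components (Perron--Frobenius applied componentwise), while the multiplicity of $-r$ equals the number $c^+(\Gamma)$ of bipartite components; since cospectrality preserves all eigenvalue multiplicities, both graphs have the same $c$ and the same $c^+$, whence the same $c^-=c-c^+$. Thus $N$, $n$, $c^+$, $c^-$ and the entire spectrum of $\mathcal Z$ agree, and Theorem~\ref{mak} yields that the two graphs have identical $-\Delta^{KC}$-spectra (indeed counting multiplicities), while they are non-isomorphic by construction.

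The only genuine point requiring care --- the main obstacle, such as it is --- is the verification in the last step that adjacency-cospectrality of regular graphs transports \emph{all} the combinatorial quantities entering the multiplicity formula, and in particular the bipartite/non-bipartite split $c^{\pm}$; this rests precisely on the spectral detection of bipartiteness through the extremal eigenvalue $-r$, which is exactly where regularity is essential (for non-regular graphs $\mathcal Z$ and $\mathcal A$ are no longer proportional and cospectrality of one need not follow from the other). Everything else is a direct bookkeeping application of Theorem~\ref{mak}.
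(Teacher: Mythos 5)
Your proposal is correct and follows essentially the same route as the paper: it exhibits the Hoffman--Ray-Chaudhuri pair of non-isomorphic, regular, adjacency-cospectral graphs and invokes the multiplicity formulae of Theorem~\ref{mak} to conclude that their $-\Delta^{KC}$-spectra coincide. The paper states this almost without argument, whereas you supply the (correct) verification that for regular graphs adjacency-cospectrality forces agreement of $\sigma(\mathcal Z)$, $N$, $n$, $c^{+}$ and $c^{-}$ via $\mathcal Z=r^{-1}\mathcal A$ and the Perron--Frobenius detection of $c$ and $c^{+}$ at the eigenvalues $\pm r$.
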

In fact, the pair in Fig. \ref{raych} is not an exception. The above multiplicity formulae and the proof of Theorem \ref{recover}
show also that the notions of isospectrality with respect the Laplacian under the $CK$ and $KC$ condition, which we denote by $-\Delta^{CK}$ and $-\Delta^{KC}$ throughout, are equivalent.
\begin{cor}
\label{isospequ} Two graphs $\Gamma_1$ and $\Gamma_2$ are isospectral with respect to $-\Delta^{CK}$  if  and only if they are isospectral with respect to $-\Delta^{KC}$.
\end{cor}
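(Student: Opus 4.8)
The plan is to show that each of the two isospectrality relations is equivalent to one and the same intermediate condition, namely that $\Gamma_1$ and $\Gamma_2$ share the spectrum of their transition matrices $\mathcal Z$ (counted with multiplicities) together with the same edge number $N$. Once this is established for $-\Delta^{CK}$ and, separately, for $-\Delta^{KC}$, the corollary follows at once by transitivity.

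First I would treat the $CK$-condition. By Theorem~\ref{mgeoalg} the spectrum of $-\Delta^{CK}$, counted with multiplicities, is completely determined by $\sigma(\mathcal Z)$ and by the integers $N,n,c,c^+,c^-$; since $\mathcal Z$ is row--stochastic its eigenvalues lie in $[-1,1]$, the multiplicity of the eigenvalue $1$ equals $c$, and (following~\cite{Bel85}) the multiplicity of $-1$ equals $c^+$, so that $n,c,c^+,c^-$ are already encoded in the multiset $\sigma(\mathcal Z)$. Thus the pair $(\sigma(\mathcal Z),N)$ determines $\sigma(-\Delta^{CK})$. Conversely, I would recover this pair from the spectrum: restricting to the window $\lambda\in(0,\pi^2)$, where $\cos\sqrt\lambda$ runs injectively over $(-1,1)$, the immanent eigenvalues reproduce exactly the eigenvalues $\alpha=\cos\sqrt\lambda$ of $\mathcal Z$ in $(-1,1)$ with the correct multiplicities. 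Reading off $c=m(0)$, then $N-n$ from $m((2k\pi)^2)=N-n+2c$, and finally $c^+$ from $m(((2k+1)\pi)^2)=N-n+2c^+$, one reconstructs the full multiset $\sigma(\mathcal Z)$ (the eigenvalues $1$ and $-1$ being supplied with multiplicities $c$ and $c^+$) as well as $N=n+(N-n)$.

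Next I would carry out the analogous argument for the $KC$-condition by means of Theorem~\ref{mak}. Here the immanent eigenvalues in $(0,\pi^2)$ reproduce the eigenvalues $\alpha=-\cos\sqrt\lambda$ of $\mathcal Z$ in $(-1,1)$, so they recover $\sigma(\mathcal Z)\cap(-1,1)$ up to the harmless reflection $\alpha\mapsto-\alpha$, which loses no information. The three structural multiplicities $m(0)=N-n+c^+$, $m(((2k+1)\pi)^2)=N-n+2c$ and $m((2k\pi)^2)=N-n+2c^+$ form a linear system that is invertible for $(N-n,c,c^+)$: indeed $N-n=2\,m(0)-m((2k\pi)^2)$, whence $c^+=m(0)-(N-n)$ and $c=\tfrac12\bigl(m(((2k+1)\pi)^2)-(N-n)\bigr)$. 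As before one then assembles the full $\sigma(\mathcal Z)$ and $N$, so that once more $(\sigma(\mathcal Z),N)$ both determines and is determined by $\sigma(-\Delta^{KC})$.

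Combining the two equivalences, $\Gamma_1$ and $\Gamma_2$ are isospectral with respect to $-\Delta^{CK}$ if and only if $(\sigma(\mathcal Z_1),N_1)=(\sigma(\mathcal Z_2),N_2)$, which in turn holds if and only if they are isospectral with respect to $-\Delta^{KC}$. The one delicate point, on which the whole clean ``reading off'' rests, is that the immanent eigenvalues never collide with the structural points $k^2\pi^2$: this is precisely because $\alpha=\pm1$ is excluded from the immanent part, so that $\arccos\alpha\in(0,\pi)$ strictly and $(2\ell\pi\pm\arccos\alpha)^2$ is never of the form $k^2\pi^2$. Verifying this separation, together with the invertibility of the small multiplicity system in the $KC$-case, are the only steps that require genuine care; everything else is a direct transcription of Theorems~\ref{mgeoalg} and~\ref{mak}.
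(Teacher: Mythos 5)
Your argument is correct and is essentially the paper's own route: the paper justifies this corollary by appealing to the multiplicity formulae of Theorems~\ref{mgeoalg} and~\ref{mak} together with the recovery of $n$, $N$, $c$, $c^+$, $c^-$ carried out in the proof of Theorem~\ref{recover}, which is exactly your reduction of both isospectrality notions to equality of the pair $(\sigma(\ZD),N)$. You merely spell out (correctly) the inversion of the small linear system of structural multiplicities and the non-collision of immanent eigenvalues with $k^2\pi^2$, which the paper leaves implicit.
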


\begin{rem}
>From the eigenvalues of $\ZD$ one can recover the number of vertices $n$ and the bipartiteness,
but, in general, the number of edges cannot be determined with the aid of $\sigma(\ZD)$, see \cite{Bel85,bugr}.
A very simple pair of graphs that are isospectral with respect to $\ZD$, but differ in the number of edges
is given by the circuit $C_4$ and the claw $K_{1,3}$ (star with $3$ edges) that have both the eigenvalues
$-2,0,0,2$.
By Theorems \ref{mgeoalg} and \ref{mak}, if two graphs are isospectral with respect to $-\Delta$ under the $CK$--condition or under the $KC$--condition, then 
they are also isospectral with respect to $\ZD$. But the latter condition is weaker than the first one, since one can recover
the number of edges from  the $-\Delta$--spectrum, see Theorem \ref{recover}. Note that the pair $C_4,K_{1,3}$ is neither isospectral
with respect to $-\Delta^{CK}$, nor with respect to $-\Delta^{KC}$, since the coranks are different.
\end{rem}

In the bipartite case, two unicyclic graphs are isospectral with respect to $-\Delta^{CK}$ if and only if they are isospectral with respect to $-\Delta^{KC}$, by Corollary~\ref{ckequalkc}.
But, one cannot determine the graph by means of these eigenvalues in this class of graphs.
\begin{cor} There are non-isomorphic pairs of bipartite unicyclic graphs that are  isospectral with respect to $-\Delta$ under the $CK$--condition or under the $KC$--condition.
\end{cor}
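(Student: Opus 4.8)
The plan is to reduce the statement to a purely combinatorial cospectrality problem for the transition matrix $\ZD$ and then to exhibit an explicit pair. First I would restrict to connected graphs, the general (disconnected) case following by forming disjoint unions. Within the class of connected bipartite unicyclic graphs one has $c(\G)=c^+(\G)=1$, $c^-(\G)=0$ and $\crk(\G)=1$, whence $N=n$. By the partition $\sigma(-\Delta^{CK})=\{0\}\cup\sigma_i\cup\sigma_s$ described after Theorem~\ref{mgeoalg}, the eigenvalue $0$ and the whole ``special'' set $\sigma_s=\{k^2\pi^2\}$ occur with multiplicities ($m(0)=1$ and $m(k^2\pi^2)=N-n+2=2$) that depend only on $n$, on $N$ and on the bipartiteness; these are therefore identical for \emph{every} graph in the class. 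Hence two such graphs are isospectral with respect to $-\Delta^{CK}$ if and only if their immanent sets $\sigma_i$ agree with multiplicities; since $\#\sigma(\ZD)=n$ can be read off the spectrum and $\{+1,-1\}\subset\sigma(\ZD)$ (the latter because the graphs are bipartite), this happens exactly when $\ZD(\G_1)$ and $\ZD(\G_2)$ are cospectral. By Corollary~\ref{ckequalkc} the same equivalence holds verbatim for $-\Delta^{KC}$, in agreement with Corollary~\ref{isospequ}. It thus suffices to produce two non-isomorphic connected bipartite unicyclic graphs with the same transition spectrum.

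Second, I would construct such a pair. The guiding principle is a grafting argument of Schwenk type: fix a common even cycle $C_{2m}$ (which will be the unique circuit, so that $\crk=1$ and bipartiteness are automatically preserved) and attach at one of its vertices two \emph{different} rooted trees that are cospectral as rooted objects for $\ZD$. Equivalently, by~\eqref{transnormlapl} one may phrase the requirement through the normalized Laplacian $\mathcal L$ and appeal to one of the known normalized-Laplacian cospectral constructions. In practice the cleanest route for a short proof is to display two explicit graphs $\G_1,\G_2$ and to verify the equality of the characteristic polynomials $\det(xI-\ZD)$ directly — equivalently, that all power sums $\mathrm{tr}\,\ZD^{2k}$ for $1\le k\le \lfloor n/2\rfloor$ coincide (the odd traces vanish by bipartiteness). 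Non-isomorphism is then certified by any invariant not recorded by $\ZD$, e.g.\ the adjacency spectrum or an explicit local feature of the two attached trees.

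The hard part will be the construction itself. The restriction that both graphs be \emph{unicyclic} forces $\crk=1$, and this blocks the tempting shortcut suggested by the preceding remark: the pair $C_4$, $K_{1,3}$ is $\ZD$-cospectral but their coranks differ ($1$ versus $0$), so one cannot simply substitute a $C_4$-gadget by a $K_{1,3}$-gadget. One must instead use a corank-preserving modification that alters the isomorphism type while leaving the full $\ZD$-spectrum unchanged. The genuine difficulty is that $\ZD=D^{-1}\AD$ normalizes by the degrees, so any local change at the grafting vertex perturbs the normalization of the entire graph; ensuring that \emph{all} singular values of the normalized biadjacency matrix (hence all immanent eigenvalues) are preserved is precisely the technical crux, and is what makes the smallest admissible example noticeably larger than the $C_4/K_{1,3}$ pair of the earlier remark.
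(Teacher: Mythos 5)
Your first paragraph is a correct and clean reduction: within connected bipartite unicyclic graphs one has $c=c^+=1$, $c^-=0$, $N=n$, so the multiplicities of $0$ and of the points of $\sigma_s$ are the same for every member of the class, and isospectrality for $-\Delta^{CK}$ (equivalently, by Corollary~\ref{ckequalkc}, for $-\Delta^{KC}$) amounts exactly to cospectrality of the transition matrices $\ZD$. This is the same reduction the paper performs implicitly via Theorems~\ref{mgeoalg} and~\ref{mak}.

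The gap is that you never actually produce the pair. The corollary is an existence statement, and everything after your reduction is a description of a strategy (Schwenk-type grafting, checking traces of powers of $\ZD$) together with an honest admission that ``the hard part will be the construction itself.'' As it stands, the existence of two non-isomorphic connected bipartite unicyclic graphs with cospectral $\ZD$ is precisely what remains to be shown, and your proposal leaves it open. The paper closes this by exhibiting the Butler--Grout pair \cite{bugr}: $\Gamma_1$ the circuit $C_8$ and $\Gamma_2$ the circuit $C_4$ with two paths of length $2$ glued to a single vertex. Both are connected, bipartite, unicyclic with $n=N=8$, they are visibly non-isomorphic (only one is regular), and both transition matrices have spectrum
\[
1,\ \tfrac{\sqrt2}{2},\ \tfrac{\sqrt2}{2},\ 0,\ 0,\ -\tfrac{\sqrt2}{2},\ -\tfrac{\sqrt2}{2},\ -1 .
\]
Your intuition that the example must be larger than the $C_4/K_{1,3}$ pair (which is ruled out because the coranks differ) is correct --- the smallest known instance lives on $8$ vertices --- but to complete the proof you must either quote such an explicit pair or carry out the grafting construction and verify the characteristic polynomials; the verification cannot be deferred.
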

Indeed, the following example by S.\ Butler and J.\ Grout \cite{bugr} displays the desired properties. Let $\Gamma_1$ be the circuit of length $8$ and $\Gamma_2$
the circuit of length $4$ with two $2$--paths glued to one vertex as in  Figure~3. Then the eigenvalues of both $\ZD(\Gamma_1)$ and $\ZD(\Gamma_2)$ read
$$1, \frac{\sqrt{2}}{2},\frac{\sqrt{2}}{2}, 0, 0, -\frac{\sqrt{2}}{2},-\frac{\sqrt{2}}{2},-1.
$$
\begin{figure}[h]\label{exbugr}
\centering
\includegraphics[width=0.25\textwidth]{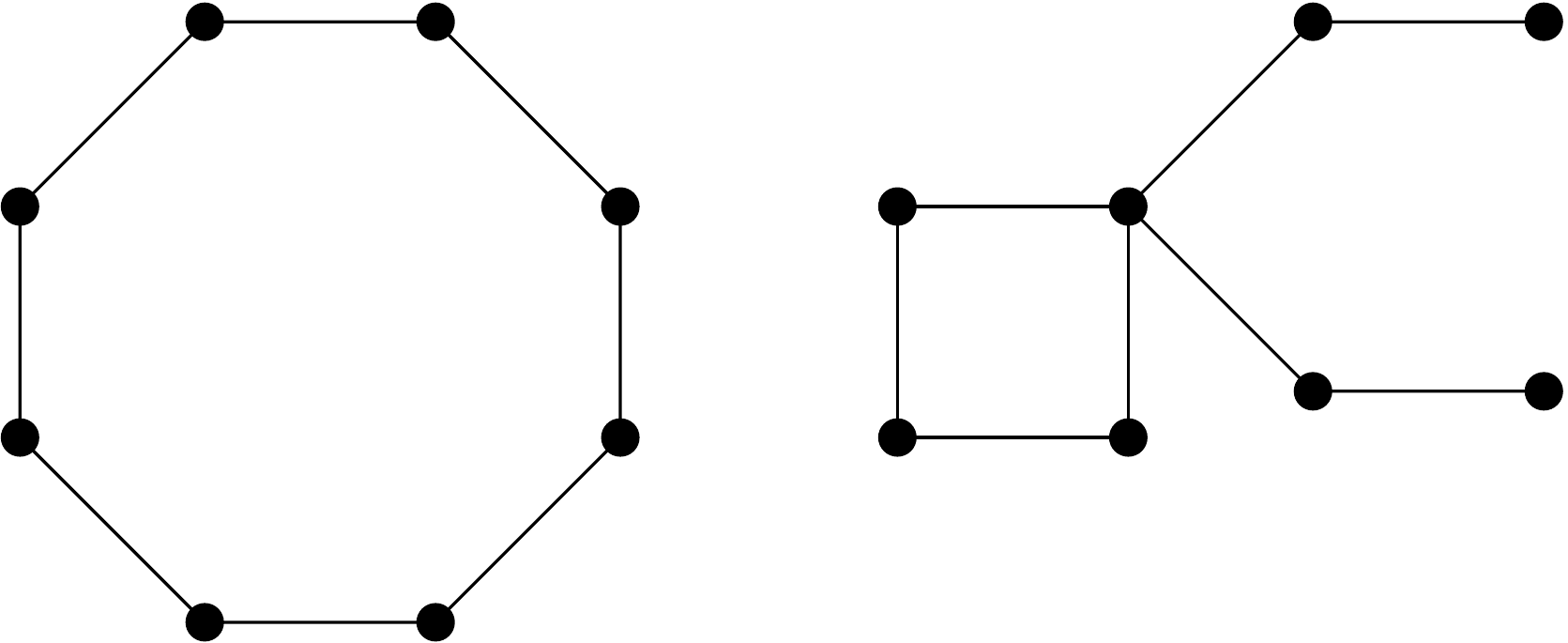}
\caption{The example from S.\ Butler and J.\ Grout \cite{bugr}: two non isomorphic bipartite and unicyclic, but isospectral graphs under the $CK$-- and the $KC$--condition.}
\end{figure}
The example also shows that one cannot recover the girth of the graph from the eigenvalues in question.
Combining Corollaries \ref{ckequalkc} and \ref{isospequ} yields that there cannot be a pair of non-isomorphic and non-bipartite unicyclic graphs that are isospectral with respect to $-\Delta^{CK}$ and $-\Delta^{KC}$. In other words 
\begin{cor} \label{recoverunic} If two unicyclic graphs are isospectral with respect to $-\Delta^{CK}$ or $-\Delta^{KC}$,
and at least one of them is not bipartite, then they are isomorphic.
\end{cor}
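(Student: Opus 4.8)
The plan is to prove the contrapositive: assuming that $\Gamma_1$ and $\Gamma_2$ are unicyclic, isospectral with respect to $-\Delta^{CK}$, and \emph{not} isomorphic, I would deduce that both must be bipartite. Equivalently, I would rule out a non-isomorphic, $CK$-isospectral pair of non-bipartite unicyclic graphs.

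First I would reduce the configuration. By Corollary~\ref{isospequ} the two notions of isospectrality coincide, so I may treat $\Gamma_1$ and $\Gamma_2$ as simultaneously isospectral for $-\Delta^{CK}$ and for $-\Delta^{KC}$. The multiplicity formulas in Theorem~\ref{mgeoalg} show that the immanent part of the $CK$-spectrum encodes precisely the eigenvalues of $\ZD$ lying in $(-1,1)$ (through $\cos\sqrt{\lambda}\in\sigma(\ZD)$), while the special values $\cos\sqrt{\lambda}=\pm1$ encode the multiplicities of $\pm1$ as eigenvalues of $\ZD$ via $n$, $N$ and $c^{\pm}$; for connected graphs these combinatorial data are themselves determined (Theorem~\ref{recover}). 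Hence $CK$-isospectral connected graphs are $\ZD$-isospectral, and since by~\cite{Bel85} connectedness makes bipartiteness equivalent to the symmetry of $\sigma(\ZD)$ about $0$, the graphs $\Gamma_1$ and $\Gamma_2$ share the same bipartiteness type. Thus ``at least one non-bipartite'' forces \emph{both} to be non-bipartite, and it remains to exclude a non-isomorphic $CK$-isospectral pair of connected, non-bipartite, unicyclic graphs.

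On this reduced configuration I would bring in Corollary~\ref{ckequalkc} together with Theorems~\ref{mgeoalg} and~\ref{mak}. Each $\Gamma_i$ is connected, unicyclic and non-bipartite, so the corollary forbids the spectra of $-\Delta^{CK}$ and $-\Delta^{KC}$ on $\Gamma_i$ from coinciding; equivalently, $\sigma(\ZD(\Gamma_i))$ is \emph{not} symmetric about $0$. Here $\mathrm{corank}(\Gamma_i)=1$, $c(\Gamma_i)=1$, $N=n$, $c^+(\Gamma_i)=0$ and $c^-(\Gamma_i)=1$, so the entire spectral content of $\Gamma_i$ under either condition is concentrated in $\sigma(\ZD(\Gamma_i))$ together with the single odd circuit it carries. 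The aim is to show that, within this rigid class, the transition-spectral data can be pushed all the way back to the graph: a non-isomorphic $\ZD$-cospectral partner would have to reproduce the asymmetric multiset $\sigma(\ZD(\Gamma_1))$ while respecting $N=n$ and the non-bipartite normalization.

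The main obstacle is precisely this last implication: $\ZD$-cospectrality (equivalently, cospectrality of the normalized Laplacian $\mathcal L$) does not in general entail isomorphism, as the bipartite Butler--Grout pair recalled above demonstrates. The point of the argument must therefore be that the cospectral constructions available for unicyclic graphs hinge on the spectral symmetry $\sigma(\ZD)=-\sigma(\ZD)$, i.e.\ on an \emph{even} circuit, which is exactly what non-bipartiteness excludes; equivalently, the sign reversal $\sigma(\ZD)\mapsto-\sigma(\ZD)$ that interchanges the $CK$- and $KC$-pictures cannot be absorbed by a genuinely different unicyclic graph unless the circuit is even. Turning this heuristic into a proof is the delicate step, and once it is in place the assumed non-isomorphic pair would be forced to be bipartite, contradicting the reduction and yielding $\Gamma_1\cong\Gamma_2$.
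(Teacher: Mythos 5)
Your reduction is fine as far as it goes: by Theorem~\ref{recover} the quantities $n$, $N$, $c(\Gamma)$, $c^{\pm}(\Gamma)$ are spectral invariants, so both graphs are forced to be connected, unicyclic and non-bipartite, and by Theorem~\ref{mgeoalg} (or Theorem~\ref{mak}) isospectrality for $-\Delta^{CK}$ (or $-\Delta^{KC}$) forces $\sigma(\ZD(\Gamma_1))=\sigma(\ZD(\Gamma_2))$ counting multiplicities. But at that point you stop. The entire content of the corollary is the implication ``two connected, non-bipartite, unicyclic, $\ZD$-cospectral graphs are isomorphic'', and you explicitly leave it as a heuristic (``the cospectral constructions available for unicyclic graphs hinge on the spectral symmetry\dots\ Turning this heuristic into a proof is the delicate step''). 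That is not an argument: the fact that the one construction recalled in the paper (Butler--Grout) happens to produce bipartite examples does not show that a hypothetical non-bipartite cospectral pair must be isomorphic, and none of the results you invoke (Theorems~\ref{mgeoalg}, \ref{mak}, \ref{recover}, Corollaries~\ref{ckequalkc}, \ref{isospequ}) says anything about when $\sigma(\ZD)$ determines a graph within a given class --- they only translate between the spectra of $-\Delta^{CK}$, $-\Delta^{KC}$ and the data $(\sigma(\ZD),n,N,c^{\pm})$. The observation you extract from Corollary~\ref{ckequalkc} (that $\sigma(\ZD(\Gamma_i))$ fails to be symmetric about $0$) is moreover never put to use.

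For comparison, the paper disposes of the statement in one line, by combining Corollary~\ref{ckequalkc} (for a connected graph the spectra of $-\Delta^{CK}$ and $-\Delta^{KC}$ coincide if and only if the graph is unicyclic and bipartite) with Corollary~\ref{isospequ} (isospectrality of a pair with respect to $-\Delta^{CK}$ is equivalent to isospectrality with respect to $-\Delta^{KC}$), concluding that a non-isomorphic, non-bipartite, unicyclic isospectral pair cannot exist. Whether or not one finds that one-liner fully transparent, your write-up neither reproduces that deduction nor replaces it with another one: it assembles essentially the same ingredients and then openly defers the decisive step. As it stands the proposal is an outline with the key implication missing, not a proof.
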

Recall that the \textit{complexity} $\kappa(\Gamma)$ of a graph $\Gamma$ is defined by the number of spanning trees in $\Gamma$.
By Kirchhoff's matrix--tree theorem, it can be calculated from the eigenvalues of
the combinatorial Laplacian $L=D-\AD.$
But this is not possible from the spectrum of the matrix $\ZD$. In fact, a formula due to Runge and Sachs shows that knowing the determinant of $\hbox{\rm Diag}\left(\AD\,\be\right)$ is necessary and sufficient in order to reconstruct the complexity of $\Gamma$ from the spectrum of $\mathcal Z$, cf.~\cite[\S~1.9, \#10]{cds}.

\begin{theo} \label{recover} From the eigenvalues (and their multiplicities) of either $-\Delta^{CK}$ or $-\Delta^{KC}$, the following invariants can be determined:
\begin{enumerate}
	\item the number of vertices $n$,
	\item the number $c(\Gamma)$ of connected components of the graph,
	\item the number of edges $N$,
	\item the numbers $c^+(\Gamma)$ and $c^-(\Gamma)$ of bipartite and non-bipartite components (and hence bipartiteness).
\end{enumerate}

However, it not possible to deduce from those eigenvalues the complexity, the degrees, and, in particular, whether the graph is regular.
\end{theo}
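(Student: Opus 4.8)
The plan is to read off the four invariants directly from the multiplicity formulae of Theorems~\ref{mgeoalg} and~\ref{mak}, and then to exhibit a single pair of graphs that witnesses the three non-recoverable quantities at once. First I would exploit the fact that the spectrum of $-\Delta^{CK}$ splits into three numerically disjoint strata: the value $\lambda=0$; the \emph{immanent} eigenvalues, characterised by $\sin\sqrt{\lambda}\neq 0$, i.e.\ those lying strictly between consecutive points $k^2\pi^2$; and the \emph{special} eigenvalues sitting exactly at $\lambda=k^2\pi^2$, where $\cos\sqrt{\lambda}=\pm 1$. Since these three classes are distinguished purely by the location of the eigenvalue on the real axis, knowing the eigenvalue list together with multiplicities lets us assign a multiplicity (possibly zero) to each stratum separately, and at each admissible point $k^2\pi^2$ in particular.

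Next I would solve the resulting linear system. By Theorem~\ref{mgeoalg} one has, for $-\Delta^{CK}$, the readings $m(0)=c(\Gamma)$, then $m((2k\pi)^2)=N-n+2c(\Gamma)$ for $k\geq 1$, and $m(((2k+1)\pi)^2)=N-n+2c^+(\Gamma)$ for $k\geq 0$; these successively yield $c(\Gamma)$, then $N-n$, and finally $c^+(\Gamma)$, whence $c^-(\Gamma)=c(\Gamma)-c^+(\Gamma)$. To separate $n$ from $N$ I would count the immanent eigenvalues lying in $(0,\pi^2)$: as $\lambda$ runs through $(0,\pi^2)$ the quantity $\cos\sqrt{\lambda}$ runs bijectively through $(-1,1)$, so each eigenvalue $\mu\in(-1,1)$ of $\mathcal Z$ contributes exactly one immanent eigenvalue $(\arccos\mu)^2$ with the same multiplicity, and this count equals the total multiplicity of the eigenvalues of $\mathcal Z$ in $(-1,1)$. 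Recalling that $\mathcal Z$ carries the eigenvalue $1$ with multiplicity $c(\Gamma)$ (Perron--Frobenius) and the eigenvalue $-1$ with multiplicity $c^+(\Gamma)$ (bipartiteness), that count is $n-c(\Gamma)-c^+(\Gamma)$, which delivers $n$ and then $N$. The $KC$ case is handled verbatim from the formulae of Theorem~\ref{mak}; alternatively it follows from the $CK$ case through Corollary~\ref{isospequ}.

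For the negative assertions I would reuse the single pair $\Gamma_1=C_8$ and $\Gamma_2$ (the circuit of length $4$ with two $2$--paths glued to one vertex) of Butler and Grout~\cite{bugr}, which has already been shown to be isospectral with respect to both $-\Delta^{CK}$ and $-\Delta^{KC}$. Here $\Gamma_1$ is $2$--regular whereas $\Gamma_2$ has a vertex of degree $4$ and two vertices of degree $1$, so the two graphs have different degree sequences and, in particular, one is regular and the other is not. Both graphs are connected and unicyclic, so their complexity equals the length of the unique circuit, namely $8$ and $4$ respectively; hence complexity is not determined either. This is fully consistent with the Runge--Sachs criterion recalled before the statement, since the two products of vertex degrees differ ($2^8\neq 4\cdot 2^5$), and this degree datum is precisely what is absent from $\sigma(\mathcal Z)$.

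The main obstacle is the recovery of $n$ itself: the three ``special'' multiplicity readings only pin down the combinations $c(\Gamma)$, $N-n$ and $c^+(\Gamma)$, so one genuinely needs the bijection between the eigenvalues of $\mathcal Z$ in $(-1,1)$ and the immanent eigenvalues in $(0,\pi^2)$, together with the exact multiplicities of $\pm 1$ in $\sigma(\mathcal Z)$, in order to break the tie between $n$ and $N$. Some care must also be taken with degenerate readings -- for instance graphs with no immanent eigenvalues, where the count is $0$ and $n=c(\Gamma)+c^+(\Gamma)$, or connected non-bipartite graphs where $m(((2k+1)\pi)^2)$ vanishes -- in which the relevant multiplicity is zero yet still fixes the invariant through the already determined quantities.
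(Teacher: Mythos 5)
Your proposal is correct and follows essentially the same route as the paper: the invariants are read off from the multiplicities at $0$, $\pi^2$, $4\pi^2$, the count $n=c(\Gamma)+c^+(\Gamma)+\sum_{\lambda\in(0,\pi^2)}m_g(\lambda)$ (which, as the paper notes, rests on the diagonalizability of $\mathcal Z$), and the same Butler--Grout pair $C_8$ versus the glued $4$-circuit for the negative assertions, including the complexities $8$ and $4$.
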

\begin{proof} 
Under the $CK$-condition, we first deduce $c(\Gamma)=m_g(0)$ and
\[
c^-(\Gamma)=\frac12 (m_g(4\pi^2)- m_g(\pi^2))\qquad \hbox{and}\qquad c^+(\Gamma)=c(\Gamma)-c^-(\Gamma).
\]
Under the $KC$-condition, instead, we deduce
\[
c^-(\Gamma)=\frac12 (m_g(\pi^2)-m_g(4\pi^2))\qquad \hbox{and}\qquad c^+(\Gamma)=m_g(4\pi^2)-m_g(0),
\]
and hence also $c(\Gamma)$. 
Since $\ZD$ is diagonalizable \cite{Bel85}, and since $m_g(1;\ZD)=c(\Gamma)$ and $m_g(-1;\ZD)=c^+(\Gamma)$ (recall that $\Gamma$ is bipartite if and only if $-1\in\sigma(\ZD)$), we deduce in both cases
\begin{equation}
\label{formel:n}
n=\sum_{\mu\in\sigma(\ZD)} m_g(\mu;\ZD)=c(\Gamma)+c^+(\Gamma) + \sum_{\lambda\in(0,\pi^2)} m_g(\lambda).
\end{equation}
Using Theorems~\ref{mgeoalg} and~\ref{mak} we obtain $N=n-2c(\Gamma)+m_g(4\pi^2)$ in the first case, and $N=m_g(0)+n-c^+(\Gamma)$ in the second one.

The pair of isospectral, but non-isomorphic graphs in  Figure~3 shows that it is impossible to determine the vertex
degrees, in particular whether they are all equal, from the data in question. Finally, the same pair displays complexities
$\kappa=8$ and $\kappa=4$, respectively.
\end{proof}

\begin{rem}
It is well-known that $n$, $N$, and $c(\Gamma)$ can be deduced from the adjacency matrix $\mathcal A$, or from a combination of both the combinatorial and the normalized Laplacian.
Moreover, it is known that $c^+(\Gamma)$, the number of bipartite components, agrees with the multiplicity of $0$ as an eigenvalue of the \emph{signless Laplacian} 
\[
{\mathcal Q}:=\hbox{\rm Diag}\left(\AD\,\be\right)+\AD,
\]
a matrix which has gained much attention in recent years, cf.~\cite{crs07}. To the best of our knowledge, however, it is unknown whether $c^+(\Gamma)$ can be found only knowing the spectrum of any of the other relevant matrices; and also whether $c(\Gamma)$ can be reconstructed from the knowledge of the spectrum of $\mathcal Q$. Hence, it seems that $\Delta^{CK}$ and $\Delta^{KC}$ are, taken individually, more comprehensive sources of spectral information than the usual matrices considered in spectral graph theory.
\end{rem}

In the regular case (say, with degree $\gamma$), the notions of isospectrality with respect to 
\begin{itemize}
\item the adjacency matrix $\mathcal A$, 
\item the combinatorial Laplacian $L=D-\AD=\gamma I-\AD$, 
\item the transition matrix ${\mathcal Z}:=D^{-1}\AD=\gamma^{-1}\AD$, 
\item the normalized (discrete) Laplacian $\mathcal{L}:=D^{-\frac12}LD^{-\frac12}=I-\gamma^{-1}\AD$, and 
\item the signless Laplacian $\mathcal{Q}:=D+\AD=\gamma I+\AD$,
\end{itemize}
where $D:=\hbox{\rm Diag}\left(\AD\,\be\right)$, are mutually equivalent. Thus, we are
led to the following    
\begin{cor} \label{regularrec} If $\Gamma$ is regular, then one can recover all the invariants mentioned in Theorem \ref{recover}, as well as the complexity from the eigenvalues of each of the operators or matrices $-\Delta^{KC}$, $-\Delta^{CK}$, $\mathcal A$, $L$, $\ZD$, $\mathcal{L}$, and  $\mathcal Q$.
\end{cor}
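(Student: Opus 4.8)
The plan is to reduce the whole statement to the recovery of two numbers, the order $n$ and the common degree $\gamma$, from which the spectrum of $\ZD$ (and, through it, of every matrix in the list) can be reconstructed; the invariants of Theorem~\ref{recover} and the complexity then follow from the facts already assembled above.

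First I would treat the five discrete matrices $\AD,L,\ZD,\mathcal L,\mathcal Q$ simultaneously. Because $\Gamma$ is $\gamma$--regular we have $\hbox{\rm Diag}(\AD\be)=\gamma I$, so each of them is an affine function $aI+b\AD$ of the adjacency matrix whose coefficients depend only on $\gamma$, as recorded just before the statement. The matrix size is $n$, hence $n$ is simply the number of eigenvalues counted with multiplicity. To pin down $\gamma$ I would use one trace identity in each case: the diagonal of $L$ (resp.\ $\mathcal Q$) is constantly $\gamma$, so $\gamma=\frac1n\operatorname{tr}L$ (resp.\ $\gamma=\frac1n\operatorname{tr}\mathcal Q$); for $\AD$ one has $\gamma=\max\sigma(\AD)$ by Perron--Frobenius; and since $\operatorname{tr}\ZD^2=\gamma^{-2}\operatorname{tr}\AD^2=\gamma^{-2}\,2N=n/\gamma$ one gets $\gamma=n/\operatorname{tr}\ZD^2$, and likewise $\gamma=n/\operatorname{tr}(\mathcal L-I)^2$. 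Thus $\gamma$, equivalently $N=\tfrac12 n\gamma$, is a spectral invariant, and the affine dictionary turns the given spectrum into $\sigma(\ZD)$. From $\sigma(\ZD)$ I then read off $c(\Gamma)=m_g(1;\ZD)$, $c^+(\Gamma)=m_g(-1;\ZD)$ and $c^-(\Gamma)=c(\Gamma)-c^+(\Gamma)$ exactly as in the proof of Theorem~\ref{recover}, which together with $n,N$ yields all the listed invariants.

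The complexity requires one further ingredient. Once $\gamma$ is known, $\sigma(L)=\{\gamma(1-\nu)\setprop \nu\in\sigma(\ZD)\}$ is determined, and Kirchhoff's matrix--tree theorem recovers $\kappa(\Gamma)$ from the nonzero eigenvalues of $L$; equivalently, the Runge--Sachs criterion recalled before Theorem~\ref{recover} requires only the datum $\det\hbox{\rm Diag}(\AD\be)=\gamma^n$, which is now available. For the two differential operators $-\Delta^{CK}$ and $-\Delta^{KC}$, Theorem~\ref{recover} already delivers $n,c(\Gamma),N,c^+(\Gamma),c^-(\Gamma)$, so only the complexity is new; here I would reconstruct $\sigma(\ZD)$ from the operator spectrum via Theorems~\ref{mgeoalg} and~\ref{mak} (the immanent eigenvalues give the eigenvalues of $\ZD$ in $(-1,1)$ with their multiplicities, while the multiplicities of $\pm1$ are fixed by $c(\Gamma)$ and $c^+(\Gamma)$), and then proceed through $\gamma=2N/n$ and the matrix--tree theorem exactly as above.

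The one genuinely delicate step---and the precise reason regularity cannot be dropped---is the recovery of $\gamma$, equivalently of $N$, from $\sigma(\ZD)$ or $\sigma(\mathcal L)$: as emphasised before Theorem~\ref{recover}, in general neither the number of edges nor the complexity is encoded in $\sigma(\ZD)$. Regularity removes this obstruction exactly because $\hbox{\rm Diag}(\AD\be)$ collapses to $\gamma I$, making $\gamma$ a trace invariant and $\det\hbox{\rm Diag}(\AD\be)=\gamma^n$ determinate; everything else is bookkeeping through the affine dictionary and the matrix--tree theorem.
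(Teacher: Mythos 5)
Your argument is correct and follows essentially the same route as the paper, which states the corollary as an immediate consequence of the preceding observation that in the regular case all five matrices are affine functions $aI+b\AD$ with coefficients determined by $\gamma$, together with Theorem~\ref{recover} and the matrix--tree/Runge--Sachs criterion. Your write-up merely supplies details the paper leaves implicit, most usefully the explicit trace identities (e.g.\ $\gamma=n/\operatorname{tr}\ZD^2$) showing that $\gamma$, and hence $N=\tfrac12 n\gamma$ and $\det\hbox{\rm Diag}(\AD\be)=\gamma^n$, are genuinely recoverable from each spectrum.
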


\begin{rem} Combining the multiplicities for harmonic functions, Theorems~\ref{mak} and ~\ref{mgeoalg}, yield
$${\rm dim}\; \Ker \Delta^{KC}-{\rm dim}\;\Ker \Delta^{CK}=N-n-c^-(\Gamma).
$$
In the cohomological language, $-\Delta^{KC}$ can be written as $dd^*$ and $-\Delta^{CK}$ as $d^*d$, where $d$ denotes the first derivative with continuity vertex conditions. It follows that
\[
{\rm ind }\; d:={\rm dim}\, \Ker \Delta^{KC}-{\rm dim}\, \Ker \Delta^{CK}
\]
agrees with the Euler characteristic ($N-n$) of $\Gamma$ if $\Gamma$ is bipartite ($c^-(\G)=0$). This yields an easy proof of ~\cite[Thm.~20]{FulKucWil07}
in the bipartite case, but
only then. The formula ${\rm ind }\; d=N-n$ does not hold in the non-bipartite case. 
This should be compared with the theory developed in \cite{Pos09}. The result of Fulling, Kuchment and
Wilson aims at comparing the null spaces of an operator acting on
$L^2(0,1;\ell^2(E))$--functions and another operator acting on the space of
their derivatives (i.e., on a space of 0-forms and 1-forms,
respectively), whereas we regard both operators as acting on the same
space. 
If the underlying graph is bipartite, then it is always possible to find
an orientation that allows to formulate KC-condition for a function u as
the CK-condition for its derivative, as done in ~\cite{FulKucWil07}; in the
non--bipartite case, this is in general impossible, as even the example
of a circuit of length 3 shows. This explains why our results are not
comparable with those of ~\cite{FulKucWil07} in the non-bipartite case. In particular,
their formula (36) and our Theorem 5.1 are only seemingly mutually
contradictory -- in fact, they refer to different objects.
\end{rem}


\end{document}